\newtheorem{thm}{Theorem}
\newtheorem{lem}[thm]{Lemma}
\newcommand{\G}{\mathcal{G}}
\newcommand{\B}{\mathcal{B}}
\newcommand{\D}{\Delta}
\newcommand{\ex}{\mathrm{Ex}}
\newcommand{\ds}{\displaystyle}
\title{Maximum degree in minor-closed classes of graphs}
\author{Omer Gim\'{e}nez~\thanks{Google Inc., Palo Alto. E-mail: \texttt{omer.gimenez@gmail.com}}
\and
Dieter Mitsche~\thanks{Laboratoire J.A. Dieudonn\'{e}, Universit\'{e} de Nice-Sophia Antipolis, France. E-mail: \texttt{dmitsche@unice.fr}}
\and
Marc Noy~\thanks{Departament
de Matem\`{a}tica Aplicada II, Universitat Polit\`{e}cnica de Catalunya,
Barcelona, Spain. E-mail: \texttt{marc.noy@upc.edu}}}
\date{}
\begin{document}

\maketitle

\begin{abstract}
Given a class of graphs $\G$ closed under taking minors, we study the maximum degree $\Delta_n$ of random graphs from $\G$ with $n$ vertices. We prove several lower and upper bounds that hold with high probability. Among other results, we find classes of graphs providing  orders of magnitude for $\Delta_n$ not observed before, such us $\log n/ \log \log \log n$ and
$\log n/ \log \log \log \log n$.

\end{abstract}

\section{Introduction}
\thispagestyle{empty}

A class of labelled graphs $\mathcal{G}$ is minor-closed if whenever a graph $G$ is in
$\mathcal{G}$ and $H$ is a minor of $G$, then $H$ is also in $\mathcal{G}$.
A basic example is the class of planar graphs or, more generally, the class of graphs embeddable in a fixed surface.

All graphs in this paper are labelled. Let $\mathcal{G}_n$ be the graphs in $\G$ with $n$ vertices. By a random graph from $\G$ of size $n$ we mean a graph drawn with uniform probability from $\G_n$. We say that an event $A$ in the class $\G$ holds with high probability (w.h.p.) if the probability that $A$ holds in $\G_n$ tends to 1 as $n\to\infty$. Let $\D_n$ be the random variable equal to the maximum vertex degree in random graphs from $\G_n$. We are interested in events of the form
$$
    \D_n \le f(n) \qquad \hbox{w.h.p.}
$$
and of the form
$$
    \D_n \ge f(n) \qquad \hbox{w.h.p.}
$$
Typically $f(n)$ will be of the form $c \log n$ for some constant $c$, or some related functions. Throughout this paper $\log n$ refers to the natural logarithm.

A classical result says that for labelled trees $\D_n$ is of order $\log n / \log \log n$ (see~\cite{moon}). In fact, much more precise results are known in this case, in particular that (see~\cite{schmutz})
$$
    {\D_n \over \log n/ \log\log n} \to 1    \qquad \hbox{in probability}.
$$
McDiarmid and Reed~\cite{MR} show  that for the class of planar graphs there exist constants $0<c_1<c_2$ such that
$$
    c_1 \log n < \D_n < c_2 \log n   \qquad \hbox{w.h.p.}
$$
More recently this result has been strengthened using subtle analytic and probabilistic methods~\cite{DGNPS}, by showing the existence of  a computable constant $c$ such that
$$
    {\D_n \over \log n} \to c    \qquad \hbox{in probability}.
$$
Analogous results have been proved for series-parallel and outerplanar graphs~\cite{DGN3}, with suitable constants.
For planar maps (planar graphs with a given embedding), much more precise results are known~\cite{GW} on the distribution of~$\Delta_n$.

The goal in this paper is to analyze the maximum degree in additional minor-closed classes of graphs. Our main inspiration comes from the work of McDiarmid and Reed mentioned above. The authors develop proof techniques based on double counting, that assume only mild conditions on the classes of graphs involved. We know explain the basic principle.

Let $\G$ be a class of graphs and suppose we want to show that a property $P$ holds in $\G$ w.h.p. Let $\B_n$ the graphs in $\G_n$ that do not satisfy $P$ (the `bad' graphs). Suppose that for every graph in $\B_n$ we have a rule producing at least $C(n)$ graphs in $\G_n$ (the `construction' function). A graph in $\G_n$ can be produced more than once, but assume every graph in $\G_n$ is produced at most $R(n)$ times (the `repetition' function). By double counting we have
$$
        |\B_n| C(n) \le |\G_n| R(n),
$$
hence
$$
    {|\B_n| \over |\G_n| } \le {R(n) \over C(n)}.
$$
If the procedure is such that $C(n)$ grows faster than $R(n)$, that is $R(n) = o(C(n))$, then we conclude that $|\B_n| =o(|\G_n|)$, that is the proportion of bad graphs goes to 0. Equivalently, property $P$ holds w.h.p. We often use the equivalent formulation $C(n)/R(n) \to \infty$.

We will apply this principle in order to obtain lower and upper bounds on the maximum degree for several classes. In this context, lower bounds are easier to obtain, and only in some cases we are able to prove matching upper bounds. The proof of the upper bound for planar graphs in~\cite{MR} depends very strongly on planarity and seems difficult to adapt it to general situations; however we obtain such a proof for outerplanar graphs. On the other hand, we develop new tools for proving upper bounds based on the decomposition of a connected graph into 2-connected components.

Here is a summary of our main results. We denote by $\ex(H)$ the class of graphs not containing $H$ as a minor.
All the claims hold w.h.p. in the corresponding class, and $c,c_1$ and $c_2$ are suitable positive constants.
The fan graph $F_n$ consists  of a path with $n-1$ vertices plus a vertex adjacent to all the vertices in the path.

\begin{itemize}
\item In $\ex(C_4)$ we have, for all $\epsilon >0$, $$
    (2-\epsilon) {\log n  \over \log\log n} \le \Delta_n \le (2+\epsilon) {\log n \over \log\log n}.
$$
 \item In $\ex(C_5)$ we have, for all $\epsilon >0$, $$
    (1-\epsilon) {\log n  \over \log\log\log n} \le \Delta_n \le (1+\epsilon) {\log n \over \log\log\log n}.
$$
  \item In $\ex(C_6)$ we have
$$
    c_1 {\log n  \over \log\log\log n} \le \Delta_n \le c_2 {\log n \over \log\log\log n}.
$$
  \item In $\ex(C_7)$ we have
$$
    c_1 {\log n  \over \log\log\log\log n} \le \Delta_n \le c_2 {\log n \over \log\log\log\log n}.
$$
  \item If $H$ is 2-connected and contains $C_{2\ell+1}$ as a minor, then in $\ex(H)$ we have
$$
    \Delta_n \ge c {\log n \over \log^{(\ell+1)}n},
$$
where $\log^{(\ell+1)} n=\log \cdots \log n$, iterated $\ell+1$ times.

% \marginpar{Change: contains all $F_n$}

  \item If $H$ is 2-connected and is not a minor of $F_n$ for any $n$,
     then in $\ex(H)$ we have
$$
    \Delta_n \ge c \log n.
$$
\item For the class of outerplanar graphs, we have
$$
    c_1 \log n \le \Delta_n \le c_2 \log n.
$$
This result was proved in a stronger form using analytic methods in~\cite{DGN3}.
\end{itemize}

\noindent
The results on $\ex(H)$ also hold when forbidding more than one graphs as a minor, as discussed in the next section.
The plan of the paper is as follows. In Section~\ref{sec:lower} we prove the lower bounds for the maximum degree. In Section~\ref{sec:blocks} we determine the structure of 2-connected graphs in the classes $\ex(C_5)$, $\ex(C_6)$ and $\ex(C_7)$.
This is needed in the proofs for the upper bounds, which are contained in Section~\ref{sec:upper}. We conclude with some remarks and several conjectures and open problems.

\section{Lower bounds}\label{sec:lower}

A \emph{pendant} vertex is a vertex of degree one. The following lemma follows from~\cite{colin}.

\begin{lem}\label{lem:isolated}
Let $H_1,\dots,H_k$ be 2-connected graphs and let $\mathcal{G}=\ex(H_1,\dots,H_k)$. Then there is a constant $\alpha >0$ such that a graph in $\mathcal{G}_n$ contains at least $\alpha n$ pendant vertices w.h.p.
\end{lem}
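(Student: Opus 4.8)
The plan is to use the double-counting principle of this section together with the results of~\cite{colin}, the point being that the hypothesis makes them applicable. Since each $H_i$ is $2$-connected, the class $\G=\ex(H_1,\dots,H_k)$ is closed under attaching a pendant vertex: if $G'$ is obtained from $G\in\G$ by adding a vertex $v$ of degree one adjacent to some $u$, then in any minor of $G'$ the vertex $v$ is either deleted, contracted into $u$, or survives with degree at most one, so a $2$-connected minor of $G'$ is already a minor of $G$ and therefore $G'\in\G$. For the same reason $\G$ is \emph{addable}, i.e.\ closed under disjoint unions and under adding bridges, which is precisely the setting of~\cite{colin}.

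First I would estimate the expected number of pendant vertices. Write $g_n=|\G_n|$ and let $P_n$ be the number of pairs $(G,v)$ with $G\in\G_n$ and $v$ a pendant vertex of $G$. Deleting $v$ and recording its unique neighbour sets up a bijection between such pairs and the data of a graph $H\in\G_{n-1}$, a label in $[n]$ for the new vertex, and a choice of neighbour among the $n-1$ vertices of $H$; every such reconstruction lands in $\G_n$ by the pendant-closure just established. Hence $P_n=n(n-1)\,g_{n-1}$, and the expected number of pendant vertices of a uniformly random graph in $\G_n$ is
$$
   \frac{P_n}{g_n}=\frac{n(n-1)\,g_{n-1}}{g_n}.
$$
Now I would invoke from~\cite{colin} that an addable minor-closed class has a growth constant $\gamma$ and is \emph{smooth}, meaning $g_n/(n\,g_{n-1})\to\gamma$; this shows that the expected number of pendant vertices is asymptotic to $n/\gamma$, hence linear in $n$.

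The main remaining difficulty is concentration: a priori the linear expectation could be accounted for by a small family of graphs with abnormally many pendants. To rule this out I would again argue by double counting, declaring \emph{bad} the graphs of $\G_n$ having fewer than $\alpha n$ pendant vertices for a constant $\alpha$ below $1/\gamma$, and showing that each bad graph gives rise (by detaching vertices from the rest of the graph and reattaching them as pendants) to far more graphs of $\G_n$, as measured by a construction function $C(n)$, than the number of times any graph is produced, measured by a repetition function $R(n)$, so that $R(n)=o(C(n))$ and hence $|\B_n|/g_n\to 0$. This concentration step is exactly the appendages argument carried out in~\cite{colin}; in the write-up one cites it directly, the contribution of our hypothesis being solely to establish the pendant-closure and addability that make that machinery apply.
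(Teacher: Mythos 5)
Your proposal is correct and matches the paper, which gives no argument of its own beyond the remark that the lemma ``follows from~\cite{colin}'': like the paper, you ultimately delegate the concentration step to McDiarmid's pendant-appearance machinery, after correctly verifying the one substantive point (that forbidding only $2$-connected minors makes the class closed under adding pendant vertices and addable, so that~\cite{colin} applies). Your expectation computation via $P_n=n(n-1)g_{n-1}$ and smoothness is a sound but optional supplement; the only nitpick is that the relevant notion in~\cite{colin} is ``pendant appearances'' (of $K_1$, here) rather than ``appendages.''
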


To illustrate our proof technique, we reprove the following well-known result  (see~\cite{moon}, and see~\cite{schmutz} for more precise results, as mentioned above), but without the need of enumerative tools.

\begin{lem}\label{lem:c3lower}\cite{moon,schmutz}
In the class of trees, for every $\epsilon > 0$,
$$
    (1-\epsilon) {\log n  \over \log\log n} \le \Delta_n.
$$
\end{lem}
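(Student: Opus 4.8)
The plan is to use the double-counting principle described in the introduction, applied to the class of labelled trees, with the ``bad'' event being that the maximum degree is too small. Set $f(n)=(1-\epsilon)\log n/\log\log n$ and let $\B_n$ be the trees on $n$ vertices with $\Delta_n < f(n)$, i.e.\ every vertex has degree less than $k:=\lceil f(n)\rceil$. The idea is that a tree with small maximum degree has ``room'' to grow a high-degree vertex, and I will exhibit a construction that, starting from such a tree, produces many trees having a vertex of large degree, thereby forcing $\B_n$ to be a vanishing fraction of all trees.

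First I would invoke Lemma~\ref{lem:isolated}: since the empty graph (or a single $2$-connected forbidden minor like $K_3$) is excluded, trees lie in a class of the required type, so w.h.p.\ a random tree has at least $\alpha n$ pendant vertices for some $\alpha>0$. This is what gives the construction its leverage. The construction would work as follows: given a bad tree $T$, pick one fixed vertex $v$ (say the vertex with the smallest label, or a vertex of maximum degree), and attach $k$ of the pendant vertices to $v$ by re-routing their single edges so that $v$ acquires degree at least $k$. More precisely, I would select a set $S$ of $k$ pendant vertices (avoiding $v$ and its neighbour) and, for each, delete its current pendant edge and add an edge from it to $v$; the result is still a tree on the same vertex set, now having a vertex of degree $\ge k > f(n)$, hence it lies outside $\B_n$. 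The number of ways to choose the ordered set of $k$ pendant vertices from the $\alpha n$ available gives the construction function $C(n)$, which is of order $(\alpha n)^k/k!$ or, more crudely, at least $\binom{\alpha n}{k}$.

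Next I would bound the repetition function $R(n)$: how many bad trees can map to a given target tree $T'$ under this construction? To reconstruct a preimage, one must identify which $k$ edges at $v$ were the newly added ones and where each of those $k$ vertices was originally pendant-attached. Since each such vertex had a unique original neighbour in $T$, recovering the preimage amounts to choosing, for each of the $k$ moved vertices, its former attachment point among $n$ candidates, giving $R(n)\le \binom{n}{k} n^{k}$ or a similar polynomial-in-$n$-to-the-$k$ bound. The decisive comparison is then $C(n)/R(n)$, and after taking logarithms the dominant terms are $k\log n$ from both $C$ and $R$, so the crude estimate above is too lossy: I must track constants carefully so that the $\log(\alpha n)$ factors in $C(n)$ beat the attachment-point factors in $R(n)$. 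This is where the precise value $k\approx (1-\epsilon)\log n/\log\log n$ enters: with this choice $k!$ is of order $\exp(k\log k)=\exp((1-\epsilon)\log n\cdot(1-o(1)))=n^{1-\epsilon-o(1)}$, and balancing the factorial growth in $C(n)$ against the reconstruction ambiguity in $R(n)$ yields $R(n)=o(C(n))$ precisely for $\epsilon>0$.

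The main obstacle I anticipate is getting the bookkeeping for $R(n)$ tight enough. A naive construction that merely adds edges will overshoot the repetition count, because many different bad trees could yield the same output once one forgets where edges came from; the art is to design the re-routing so that the original attachment data is almost recoverable from the target, keeping $R(n)$ polynomial of degree $O(k)$ in $n$ while making $C(n)$ grow like $n^{k}/k!$. The factor $k!$ in the denominator of $C(n)$, coming from the unordered choice of pendant vertices, is exactly what the threshold $(1-\epsilon)\log n/\log\log n$ is calibrated to overcome, so the heart of the proof is the asymptotic inequality $\log k! \sim k\log k \sim (1-\epsilon)\log n$ together with showing the leftover reconstruction ambiguity contributes only a lower-order term. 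Once $C(n)/R(n)\to\infty$ is established, the double-counting bound $|\B_n|/|\G_n|\le R(n)/C(n)\to 0$ finishes the argument.
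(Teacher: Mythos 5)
Your framework (double counting, Lemma~\ref{lem:isolated} supplying $\alpha n$ pendant vertices, the calibration $k!\approx n^{1-\epsilon}$) is the right one, but the construction you propose does not close the argument, and the gap is quantitative, not just bookkeeping. With $v$ a \emph{fixed} (canonically chosen) vertex, your construction function is $C(n)=\binom{\alpha n}{k}\le (\alpha n)^k/k!$, while a typical constructed tree genuinely has order $n^{k}$ preimages: almost any reattachment of the $k$ moved leaves to low-degree vertices of $G'$ produces a valid bad tree that maps back to $G'$. Hence no admissible repetition bound can be smaller than roughly $(cn)^k$, and
$$
\frac{C(n)}{R(n)}\ \le\ \frac{(\alpha n)^k/k!}{(cn)^k}\ =\ \frac{(\alpha/c)^k}{k!}\ \longrightarrow\ 0 ,
$$
so the double-counting inequality $|\B_n|/|\G_n|\le R(n)/C(n)$ yields nothing. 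You correctly identify that $k!\approx n^{1-\epsilon}$ is the quantity to be ``overcome,'' but you never supply what overcomes it: since the $n^k$ from choosing the pendant set exactly cancels against the $n^k$ reattachment ambiguity, the argument needs one \emph{extra} free choice contributing a factor of order $n$ to $C(n)$. The paper gets it by not attaching the chosen leaves to a pre-existing vertex at all: it takes $s+1$ pendant vertices, builds a star among \emph{them} with one of them, $v_1$, as centre, and then joins $v_1$ to a \emph{freely chosen} vertex $u$ of the remaining tree; the $\ge n/2$ choices of $u$ are precisely the surplus that beats $s!=n^{1-\epsilon+o(1)}$, giving $C(n)/R(n)\ge n(\alpha/2)^{s+1}/(2\,s!)\to\infty$.

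The second issue is the one you flag but defer: your bound $R(n)\le\binom{n}{k}n^k$ is too lossy, and ``the art is to design the re-routing so that the attachment data is almost recoverable'' is exactly the step that must be carried out, not postponed. The paper's design does it: because $G$ is bad, every vertex of $G$ has degree less than $s$, so in $G'$ the star centre $v_1$ (degree $s+1$) is the \emph{unique} vertex of maximum degree and $u$ is its unique non-pendant neighbour; the $s+1$ vertices to be reattached are therefore determined with no guessing, and $R(n)\le n^{s+1}$ with no spurious $\binom{n}{k}$ factor. Your variant can be repaired along the same lines: let $v$ range over all vertices (restoring the missing factor $n$ in $C(n)$), observe that in $G'$ it is the unique vertex of degree exceeding $k$ so it need not be guessed, and note that the ambiguity over which of its pendant neighbours are newly attached is at most $\binom{O(k)}{k}=2^{O(k)}$, a lower-order term --- but as written, with $v$ fixed and $R(n)$ left as a black box, the proof does not go through.
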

\begin{proof}
 Let $\mathcal{G}$ be the class of trees, and $\G_n$ the class of trees with exactly $n$ vertices. Let $\B_n \subseteq \G_n$ denote the set of bad graphs with  $\Delta_n < (1-\epsilon) {\log n  \over \log\log n}$, and suppose for contradiction that $|\mathcal{B}_n| \geq \mu |\mathcal{G}_n|$ for some $\mu > 0$, infinitely often. Our goal is to show that we can obtain $\omega(|\mathcal{B}_n|)$ new graphs in $\mathcal{G}_n$, or equivalently, $C(n) / R(n) \rightarrow \infty$, contradicting $|\mathcal{B}_n| \geq \mu |\mathcal{G}_n|$. Let $G$ be a graph in $\B_n$. By Lemma~\ref{lem:isolated}, $G$ has at least $\alpha n$ pendant vertices w.h.p. Choose from these a subset of size $s+1$, where $s=\lceil (1-\epsilon) {\log n \over \log \log n} \rceil$, and delete all their pendant edges. Among those choose a vertex, call it $v_1$, and make it adjacent to all other $s$ vertices. Finally, choose a vertex $u$ different from the $s+1$ chosen vertices, and make $u$ adjacent to $v_1$ (we have at least $n-s \geq n/2$ choices for $u$). In this way one can construct at least $\binom{\alpha n}{s+1}(s+1) \frac{n}{2}$ graphs.
 From how many graphs $G$ may the newly constructed graph $G'$ come from? We identify $v_1$ as the only vertex with largest degree in $G'$ and $u$ as the only non-pendant neighbor of $v_1$. In order to reconstruct $G$ completely we only need  to reattach the $s+1$ vertices in all possible ways, which can be done in at  most $n^{s+1}$ ways. Hence
$$
    {C(n) \over R(n) } \geq \frac{\binom{\alpha n}{s+1}(s+1)n}{2n^{s+1}} \geq \frac{n (\alpha/2)^{s+1}}{2s!}.
$$
Taking logarithms, this gives
$$
    \log {C(n) \over R(n) } \geq \log n  - s \log s - O(s) = \log n - (1-\epsilon)(1-o(1))\log n,
$$
which tends to infinity. Hence, $|\mathcal{B}_n|=o(|\mathcal{G}_n|)$, as was to be proved.
\end{proof}

\noindent Notice that exactly the same proof works
for the class of forests, that is, $\ex(C_3)$.

\medskip
Now we are ready to state new results that can be obtained using our techniques.
In order to prove a lower bound for $\Delta_n$ in a class $\G$, the basic idea is to generalize the previous proof. Take a graph $G$ in    $\G_n$ whose maximum degree is too small (a bad graph), take enough pendant vertices and make with them a special graph $S$ rooted at a special vertex $v$ (in the previous proof a star rooted at its center), and attach $S$ to $G$ through a single edge, producing a new graph $G'$ in $\G_n$. Then $v$ becomes the unique vertex of maximum degree $s=|S|$, and $G$ can be reconstructed from $G'$ easily by reattaching the vertices in $S$, which are neighbours of $v$ in $G'$. 
Double counting is then used to show that the proportion of bad graphs 
goes to~$0$ as $n$ goes to infinity.  

\begin{thm}\label{th:lower}
The following claims refer to the class $\ex(H_1,\dots,H_k)$, where $c>0$ is a suitable constant.
\begin{enumerate}
  \item If all the $H_i$ are 2-connected and none of them is a minor of a fan graph $F_n$, then
$$
    \D_n \ge c \log n      \qquad \hbox{w.h.p.}
$$
This holds in particular if the $H_i$ are 3-connected or not outerplanar.

 \item If all the $H_i$ are 2-connected and contain $C_{4}$ as a minor, then for every $\epsilon >0$,
$$
    \D_n \ge (2-\epsilon) {\log n \over \log\log  n}     \qquad \hbox{w.h.p.}
$$

 \item If all the $H_i$ are 2-connected and contain $C_{5}$ as a minor, then for every $\epsilon >0$,
$$
    \D_n \ge (1-\epsilon) {\log n \over \log\log \log  n}     \qquad \hbox{w.h.p.}
$$

 \item If all the $H_i$ are 2-connected and contain $C_{2\ell+1}$ as a minor for some $\ell \geq 3$, then
$$
    \D_n \ge c {\log n \over \log^{(\ell+1)} n}     \qquad \hbox{w.h.p.}
$$
In particular,  the bound $\D_n \ge c \log n / \log \log n$ always holds, since every 2-connected graph contains $C_3$ as a minor.
\end{enumerate}
\end{thm}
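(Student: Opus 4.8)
The plan is to treat all four parts through the single double-counting scheme of Lemma~\ref{lem:c3lower}, changing only the gadget grafted onto a bad graph. Fix one of the cases and let $d$ be the target degree. Given $G\in\B_n$, I would use Lemma~\ref{lem:isolated} to secure $\alpha n$ pendant vertices, detach $m$ of them (where $m=|S|$), reassemble them into a rooted graph $S$ with root $v$ of degree $d$, and attach $S$ to $G$ by one new edge joining a fixed vertex of $S$ to a vertex $u\in G$ (at least $n/2$ choices for $u$). The load-bearing observation is that this edge is a bridge, so every block of the resulting graph $G'$ is a block of $G$ or a block of $S$; since each $H_i$ is $2$-connected, any $H_i$-minor of $G'$ is confined to a single block, and therefore $G'\in\G_n$ as soon as $S\in\G$. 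For the count, $v$ is the unique vertex of maximum degree of $G'$, so $G$ is recovered by deleting $S$ and reattaching its $m$ formerly pendant vertices; this gives $R(n)\le n^{m}$, while $C(n)\ge\binom{\alpha n}{m}N(S)(n/2)$ with $N(S)=m!/|\mathrm{Aut}(S,v)|$ the number of labelled rooted copies of $S$. Consequently
$$
\log\frac{C(n)}{R(n)}\ \ge\ \log n-m\log(1/\alpha)-\log|\mathrm{Aut}(S,v)|-o(\log n),
$$
and the whole problem reduces to building, in each case, an $S\in\G$ of root degree $d$ as large as possible while keeping $m\log(1/\alpha)+\log|\mathrm{Aut}(S,v)|$ below $(1-\epsilon)\log n$.

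For part~(1), no $H_i$ is a minor of a fan, so $F_{d+1}\in\G$ for every $d$; taking $S=F_{d+1}$ rooted at its apex gives $m=d+1$ and $|\mathrm{Aut}(S,v)|=2$ (only the path reflection), and the displayed quantity tends to infinity for $d$ up to $(1-\epsilon)\log n/\log(1/\alpha)$, which is the bound $\D_n\ge c\log n$. For part~(2), every $H_i$ contains $C_4$, and I would use the equivalence that a graph has a $C_k$-minor iff it has a cycle of length $\ge k$: it then suffices that $S$ have circumference $\le 3$, since then $S$ is $C_4$-minor-free and hence $H_i$-minor-free for all $i$. The windmill of $d/2$ triangles sharing the apex $v$ has $m=d+1$, circumference $3$, and $|\mathrm{Aut}(S,v)|=2^{d/2}(d/2)!=d!!$, so $\log|\mathrm{Aut}(S,v)|\sim\tfrac12 d\log d$; balancing against $\log n$ permits $d$ up to $(2-\epsilon)\log n/\log\log n$.

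Parts~(3) and~(4), together with the universal base case, need gadgets of bounded circumference whose automorphism group is small per unit of root degree. The base case $\ell=1$ is the star of Lemma~\ref{lem:c3lower}: it has circumference $\le 2$, so it is $C_3$-minor-free, and it applies to every $2$-connected $H_i$ (each contains $C_3$), giving the universal bound $\D_n\ge c\log n/\log\log n$. For the general case, since each $H_i$ contains $C_{2\ell+1}$ it suffices that $S$ have circumference $\le 2\ell$, and I would build $S$ as an $\ell$-level hierarchy: partition the $d$ neighbours of $v$ into groups and add, for each group, a vertex adjacent to exactly its members; then partition these group vertices into supergroups, each again served by a new vertex; and iterate for $\ell$ levels. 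A short induction shows the longest cycle climbs from a bottom vertex through one vertex per level to the top and descends symmetrically, so the circumference is exactly $2\ell$ and $S\in\G$. Choosing the bottom group size to be $\log^{(\ell-1)}d$ (and the higher sizes suitably) makes the leading part of $\log|\mathrm{Aut}(S,v)|$ equal to $d\log^{(\ell)}d$, with the remaining levels and the total size $m=O(d)$ contributing only $o(d\log^{(\ell)}d)$. The condition $d\log^{(\ell)}d\le(1-\epsilon)\log n$ then holds for $d$ up to $\sim\log n/\log^{(\ell+1)}n$; for $\ell=2$ this yields the sharp constant in part~(3), and for $\ell\ge 3$ the bound $c\log n/\log^{(\ell+1)}n$ in part~(4).

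The bookkeeping of the double counting and the Stirling-type asymptotics of $\log|\mathrm{Aut}(S,v)|$ are routine. The real difficulty lies in the recursive gadget of parts~(3)--(4): one must produce a hierarchy that is provably of circumference at most $2\ell$ (so that it avoids $C_{2\ell+1}$, hence every $H_i$) while simultaneously pushing $\log|\mathrm{Aut}(S,v)|$ down to $d\log^{(\ell)}d$. These demands oppose each other---deeper hierarchies shrink the automorphism group but lengthen cycles---and it is exactly the circumference budget $2\ell$ that caps the depth at $\ell$ levels, and thus the degree at $\sim\log n/\log^{(\ell+1)}n$. Verifying that each added level raises the circumference by precisely $2$, and that the inter-level (group-permuting) symmetries contribute only lower-order terms, is the delicate part of the argument.
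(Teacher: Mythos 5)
Your proposal is correct and follows essentially the same route as the paper: the same double-counting scheme with the same gadgets (the fan for part~1, the triangle windmill obtained by matching up the star's leaves for part~2, and a star with an $(\ell-1)$-level grouping hierarchy of circumference $2\ell$ for parts~3--4), with the same parameter choices for the group sizes. Packaging the construction count as $m!/|\mathrm{Aut}(S,v)|$ rather than as explicit multinomial coefficients, and adding fresh group-center vertices instead of electing the centers from within each group, are only cosmetic differences.
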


\begin{proof}
Throughout the proof we will assume for contradiction that there is some constant $\mu > 0$ such that for each item and its corresponding graphs in $\mathcal{B}_n$, we have $|\mathcal{B}_n| \geq \mu |\mathcal{G}_n|$ infinitely often. Our goal is to show that we can obtain $\omega(|\mathcal{B}_n|)$ new graphs in $\mathcal{G}_n$, or equivalently, $C(n) / R(n) \rightarrow \infty$, contradicting $|\mathcal{B}_n| \geq \mu |\mathcal{G}_n|$.
Since, by assumption, $|\mathcal{B}_n| \geq \mu |\mathcal{G}_n|$, we know by Lemma~\ref{lem:isolated} that for $n$ large enough almost all graphs in $\mathcal{B}_n$ have at least $\alpha n$ pendant vertices, and we will condition on this from now on.

\textbf{1.}  Let $\mathcal{G}= \ex(H_1,\dots,H_k)$ and let $\mathcal{B}_n \subseteq \mathcal{G}_n$ the graphs with $\Delta_n < c \log n$, where $c$ is a sufficiently small constant, and let $h = \lceil c \log n\rceil$. %By the previous lemma we can assume that graphs in $\G_n$ have at least $\alpha n$ pendant vertices w.h.p.
Let $G$ be a graph in $\B_n$. Choose an ordered list $v_1,\dots, v_h$ of $h$ pendant vertices in $G$, delete the edges joining the $v_i$ to the rest of the graph, and make a copy of $F_h$ with a path $v_2,\dots,v_h$ and $v_1$ adjacent to all of them. Select a vertex $u$ of $G$ different from the $v_i$ and make it adjacent to $v_1$.
The graph $G'$ constructed in this way belongs to $\G_n$, since the $H_i$ are 2-connected and none of them is a minor of a fan graph, and has the same number of vertices as $G$.

The number of graphs constructed in this way is at least (where $(m)_k$ denotes a falling factorial)
$$
    (\alpha n)_h (n-h) \ge  \left({\alpha n\over 2}\right)^h n,
$$
the last inequality being true for $n$ large enough; we use the fact that $h = \lceil c \log n \rceil$ is small compared with~$n$.

How many times a graph $G'$ can be constructed in this way? Since $G \in \B_n$,  $v_1$ can be identified as the only vertex of  degree $h$. Vertices $v_2,\dots,v_n$ can be identified as the neighbors of $v_1$ inducing a path (among the neighbors of $v_1$, $u$ is the only cut-vertex, and hence it can be identified easily). In order to recover $G$, we delete all the edges among the $v_i$ and the edge $v_1u$, and make $v_1,\dots,v_n$ adjacent to one of the remaining vertices through a single edge. The number of possibilities is at most
$$
    (n-h)^h \le n^h.
$$
Summarizing, we can take $C(n) =  (\alpha/2)^h n^{h+1}$
and $R(n) = n^h$. Then
$$
    {C(n) \over R(n) } \ge n  (\alpha/2)^{c \log n},
$$
which tends to infinity if $c$ is small enough.
This finishes the proof.

\textbf{2.}
Assume now that the $H_i$ contain $C_4$ as a minor, that is, they all contain a cycle of length at least four. As before, let $\mathcal{G}= \ex(H_1,\dots,H_k)$, let $\mathcal{B}_n \subseteq \mathcal{G}_n$ be the graphs with $\Delta_n < (2-\epsilon) \log n/\log\log n$, and let  $s = \lceil  (2-\epsilon) \log n/\log\log n\rceil$.
Let $G$ be a graph in $\B_n$. Choose an (unordered)  set of $s+1$ pendant vertices $v_1,\dots, v_{s+1}$ in $G$, and delete the edges joining the $v_i$ to the rest of the graph. Among those choose one of them, say $v_1$, and make it adjacent to all others. The other $s$ vertices are paired up, and vertices of pairs are made adjacent (assume $s$ even, otherwise one vertex remains unpaired). Finally, another pendant vertex $u$ is chosen and made adjacent to $v_1$. Note that there are at least $\alpha n/2$ choices for~$u$. There are thus at least $\binom{\alpha n}{s+1}(s+1)\left((s-1)!!\right) (\alpha n/2)$ constructions, where $(2k-1)!!=1\cdot3 \cdots (2k-1)$.  The graph $G'$ constructed in this way belongs to $\G_n$, and has the same number of vertices as $G$.
When reconstructing~$G$, $v_1$ can be  identified as the unique vertex of maximum degree, and $u$ is identified as  the only neighbor of $v_1$ adjacent to a vertex who is not a neighbor of $v_1$. Thus, only the $s+1$ chosen vertices have to be reattached, and there are at most $n^{s+1}$ choices. Hence,
$$
    {C(n) \over R(n) } \ge   \frac{\binom{\alpha n}{s+1}(\frac12 \alpha n)\left( (s+1)!!\right)}{n^{s+1}} \ge \frac{(\frac12 \alpha)^{s+2}\left((s+1)!!\right)n}{(s+1)!}.
$$
Using  $(2g-1)!!=(2g)!/(2^{g}g!)$ and taking logarithms we obtain
$$
\log {C(n) \over R(n) } \ge \log n - (s/2) \log s - O(s) = \log n - (1-(\epsilon/2))(1-o(1)) \log n,
$$
which tends to infinity, as desired.

\textbf{3.}
Now we may assume that the $H_i$ contain $C_5$ as a minor. As before, Let $\mathcal{G}= \ex(H_1,\dots,H_k)$ and let $\mathcal{B}_n \subseteq \mathcal{G}_n$ be the graphs with $\Delta_n < (1-\epsilon) \log n/\log\log\log n$, and let  $s = \lceil  (1-\epsilon) \log n/\log\log\log n\rceil$.
% We assume again that graphs in $\G_n$ have at least $\alpha n$ pendant vertices w.h.p.

Let $F_{n,m}$ be the following graph. Take $m$ disjoint copies of $K_{2,n-1}^+$ (the complete bipartite graph $K_{2,n-1}$
plus an edge joining the two vertices in the part of size two),
and glue them identifying a vertex of degree $n-1$ in each copy. Notice that the longest cycle in $F_{n,m}$ is $C_4$, and that it has $mn+1$ vertices.

Let $G$ be a graph in $\B_n$. Choose a set of $s+1$ pendant vertices $v_1,\dots, v_{s+1}$ in $G$, delete the edges joining the $v_i$ to the rest of the graph, and make a copy of $F_{r,s/r}$ with the $v_i$, where $r$ is determined later. Let $v_1$ be the vertex chosen to be adjacent to all other $v_i$ (there are $s+1$ choices for this vertex). Select a vertex $u$ of $G$ different from the $v_i$ and make it adjacent to $v_1$.
The graph $G'$ constructed in this way belongs to $\G_n$, since the $H_i$ are 2-connected and have no cycle of length more than four, and has the same number of vertices as $G$.

The number of graphs constructed in this way is at least
$$
{ {\alpha n \choose s+1} (s+1) {s \choose r,\dots,r} r^{s/r} {n \over 2}
\over (s/r)!
},
$$
where the first binomial is for the choice of the pendant vertices; $(s+1)$ is for the choice of the center vertex ($v_1$), the multinomial coefficient divided by $(s/r)!$ stands for the number of partitions of the $s$ vertices into groups of size $r$; the factor $r^{s/r}$ for choice of the vertices of degree $r$ in each group; and finally $n/2$ is a lower bound for the choices of the target vertex~$u$.
The number of ways such a graph $G'$ can be constructed is at most $n^{s+1}$, the argument is the same as before. Therefore, for $n$ large enough, we have
$$
    {C(n) \over R(n) } \ge
{ {\alpha n \choose s+1} (s+1) {s \choose r,\dots,r} r^{s/r} {n \over 2}
\over (s/r)!n^{s+1}} \ge
{ ({\alpha\over2})^{s+1} {n \over 2} r^{s/r} \over (r!)^{s/r}(s/r)!
}.
$$
Here and in the following we ignore lower-order terms, and in addition equalities and inequalities have to be understood up to $1+o(1)$ terms.
Taking logarithms in the last expression we obtain the following quantity
$$
    (s+1)\log {\alpha\over2} + \log{1\over2} + \log n  +
    {s \over r} \log r - s \log r - {s\over r} \log {s \over r}.
$$
For the choices
$$
    s = (1-\epsilon){\log n \over \log\log\log n} , \qquad
    r = {2 \log s \over \epsilon \log\log s}
$$
we can safely ignore the term $(s+1)\log(\alpha/2)+\log(1/2)+(s/r)\log r$.
Plugging in these values of $s$ and $r$ into the remaining term, we obtain
\begin{align*}
& \log n - s \log r - \frac{s}{r}\log \frac{s}{r}  \\
\geq & \log n - s (\log \log s - \log \log \log s)- \frac{\epsilon}{2}s \log \log s \\
\geq & \log n - \left(1+\frac{\epsilon}{2}\right) s \log \log s \\
\geq & \log n - \left(1+\frac{\epsilon}{2}\right)(1-\epsilon)\log n,
\end{align*}
which tends to infinity, since $(1+\frac{\epsilon}{2})(1-\epsilon) < 1$.

\textbf{4.}
As before, assume that the $H_i$ contain $C_{2\ell+1}$ as a minor, and let $\mathcal{G}= \ex(H_1,\dots,H_k)$. Let $\mathcal{B}_n \subseteq \mathcal{G}_n$ be the graphs with $\Delta_n < c \log n/\log^{(\ell+1)}n$ (where $c$ is a small enough constant), and let  $s = \lceil c \log n/\log^{(\ell+1)}n \rceil$. %We assume again that graphs in $\G_n$ have at least $\alpha n$ pendant vertices w.h.p.

Let $G$ be a graph in $\B_n$. Choose a set of $s+1$ pendant vertices $v_1,\dots, v_{s+1}$ in $G$, delete the edges joining the $v_i$ to the rest of the graph, and make a copy of the following graph $F$ with the $v_i$. First, as before, choose one special vertex, call it $v_1$, and make it adjacent to all other $v_i$.
%Attach $v$ to the $v_i$.
Group the remaining $v_i$ (all except for $v_1$) into groups of size $r_1=\log s /\log^{(\ell)}s$
(we ignore rounding issues, taking care of them below). Choose in each of the $s/r_1$ groups a center vertex. Call all center vertices to be vertices at level $1$.
Iteratively, for $i=1,\ldots,\ell-2$, do the following: group each group of size $r_i-1$ (from each group we eliminate the center vertices at level $i$) into subgroups of size $r_{i+1}=\log^{(i+1)} s/\log^{(\ell)}s$. Choose in each subgroup a new center vertex, and call all center vertices chosen in this step to be vertices at level $i+1$.  Connect each center vertex at level $i$ with all center vertices at level $i+1$ resulting from subgroups of the group of vertex~$i$. Connect all center vertices at level $\ell-1$ with the remaining vertices of its corresponding subgroup  (those vertices not chosen as centers).

Observe that the graph $F$ does not contain a $C_{2\ell+1}$, since in the construction we add a forest of maximum path length $2(\ell-1)$ to a star centered at $v_1$, and thus the maximum cycle length is~$2\ell$.

Next select a vertex $u$ of $G$ different from the $v_i$ and make it adjacent to $v_1$. The graph $G'$ constructed in this way belongs to $\G_n$, and has the same number of vertices as $G$.
As before, we count the number of different graphs obtained by applying this construction to one graph of $\mathcal{B}_n$. We obtain at least
$$
\frac{\frac  n 2 \binom{\alpha n}{s+1}(s+1)\binom{s}{r_1,\ldots,r_1} \prod_{i=1}^{\ell-2} \binom{r_i-1}{r_{i+1},\ldots,r_{i+1}}^{s/r_i(1+\beta_i)} (r_i-1)^{s/r_i(1+\beta_i)}}{(\frac{s}{r_1})! \prod_{i=1}^{\ell-2}((\frac{r_i-1}{r_{i+1}})!)^{s/r_i(1+\beta_i)}}
$$
many graphs, where the $\beta_i=o(1)$ take into the account rounding issues and also the fact that in the $i$th step only $r_i-1$ vertices are split into subgroups of size $r_{i+1}$ (for example, we approximate $\frac{s (r_1-1)}{r_1 r_2}$ by $\frac{s}{r_2}$; $\beta_2$ accounts for the difference. Indeed, even for the last term $\beta_{\ell-2}$ the error term is bounded from above by $\sum_{i=1}^{\ell-3}\frac{1}{r_i}=o(1)$.  By the same argument as before, a new graph can have at most $n^{s+1}$ preimages. Thus, for $n$ sufficiently large (the factors $r_i^{s/r_i}$ in the denominator are a lower bound corresponding to the fact that the factors $r_i$ in the numerator do not exactly cancel), we have %ignoring the last factors $(r_i-1)^{s/r_i(1+\beta_i)}$ in the numerator,
$$
{C(n) \over R(n) } \geq
\frac{\frac12 n (\frac12 \alpha)^{s+1} ((r_1-1)!)^{s/r_1} }
{(r_1!)^{s/r_1}(\frac{s}{r_1})! (r_{\ell-1})!^{s/r_{\ell-1}(1+\beta_{\ell-1})} \prod_{i=2}^{\ell-2} r_i^{s/r_i}\prod_{i=1}^{\ell-2} ((\frac{r_i-1}{r_{i+1}})!)^{s/r_i(1+\beta_i)}}.
$$
Taking logarithms (again ignoring lower order terms), we obtain
$$
\log n+s \log (r_1-1)-s \log r_1-\frac{s}{r_1}\log \frac{s}{r_1}-s \log r_{\ell-1}-\sum_{i=1}^{\ell-2} \frac{s}{r_i}\frac{r_i-1}{r_{i+1}}\log \frac{r_i-1}{r_{i+1}}.	 $$
Using $s \log (r_1-1)=s \log(r_1)+s\log(1-1/r_1)$ and $\frac{s}{r_i}\frac{r_i-1}{r_{i+1}}\log \frac{r_i-1}{r_{i+1}} \leq \frac{s}{r_{i+1}}\log r_i$,
and once more ignoring lower order terms,
we get that this expression is at least
\begin{equation}\label{eq:expr}
\log n- \frac{s}{r_1} \log \frac{s}{r_1}-s \log r_{\ell-1}-\sum_{i=1}^{\ell-2} \frac{s}{r_{i+1}}\log r_i.
\end{equation}
Plugging in the values $r_i=\log^{(i)}s/\log^{(\ell)} s$, all but the first term are $(1+o(1))s \log^{(\ell)} s$, and thus, plugging in the value $s=c \log n/\log^{(\ell+1)}n$,  for $c < 1/\ell$, the expression in~\eqref{eq:expr} tends to infinity.
\end{proof}

\noindent{\it Remark.}
 The 2-connected graphs which are a minor of some $F_n$ consist just of a cycle and some chords, all of them incident to the same vertex. In particular, if we forbid the graph consisting of a cycle of length six $v_1v_2v_3v_4v_5v_6$ and the chords $v_1v_3$ and $v_4v_6$, the statement still holds. It also holds for the 6-cycle and the chords $v_1v_3, v_3v_5, v_5v_1$.

\section{Characterization of blocks in $\ex(C_5),\ex(C_6)$ and $\ex(C_7)$}\label{sec:blocks}

In this section we determine all 2-connected graphs in the classes $\ex(C_5),\ex(C_6)$ and $\ex(C_7)$. This is an essential ingredient for the proofs in the next section.

As usual, $K_{2,n}$ is the complete bipartite graph with partite sets of size $2$ and $n$. Denote by $K_{2,n}^+$ the graph obtained from $K_{2,n}$ by adding an edge between the two vertices of degree $n$. Then we have the following.

\begin{lem}\label{lem:c5}
The only 2-connected graphs in $\ex(C_5)$  are $K_3$,  $K_{2,m}$ and~$K^+_{2,m}$, for $m\ge2$.
\end{lem}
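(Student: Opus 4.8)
The plan is to convert the minor condition into a statement about cycle lengths and then carry out a direct structural classification. The starting point is the elementary fact that a graph contains $C_5$ as a minor if and only if it has a cycle of length at least five; consequently a 2-connected graph lies in $\ex(C_5)$ precisely when its circumference (length of a longest cycle) is at most $4$. So the whole lemma reduces to classifying the 2-connected graphs of circumference at most $4$, and every step below is a bound on cycle lengths obtained from the fan version of Menger's theorem.

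The base cases are immediate: a 2-connected graph on three vertices is $K_3$, so I would assume $|V(G)|\ge 4$ and first rule out circumference $3$. If the longest cycle were a triangle $T=\{a,b,c\}$, any fourth vertex $d$ is joined to $T$ by two internally disjoint paths ending at two distinct vertices of $T$, say $a$ and $b$; concatenating these with the length-$2$ arc $a\,c\,b$ yields a cycle of length at least $4$, a contradiction. Hence for $|V(G)|\ge 4$ the circumference is exactly $4$, and $G$ contains a $4$-cycle $C=v_1v_2v_3v_4$.

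The heart of the argument is to show that $C$ already exhibits two ``poles'' to which everything else is attached. For a vertex $w\notin C$, take two disjoint paths from $w$ to distinct vertices of $C$. If their endpoints were adjacent on $C$, closing through the length-$3$ arc of $C$ would give a cycle of length at least $5$; so the endpoints must form an opposite pair, say $v_1,v_3$, and closing through the length-$2$ arc $v_1v_2v_3$ then forces both paths to be single edges, so $w\sim v_1$ and $w\sim v_3$ (and a short check shows $w$ is adjacent to neither $v_2$ nor $v_4$, as that would make a $5$-cycle). Next I would show all outside vertices use the \emph{same} diagonal: a vertex attached to $\{v_1,v_3\}$ together with one attached to $\{v_2,v_4\}$ produces an explicit $6$-cycle through $C$. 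Writing $a=v_1,\ b=v_3$ for the common poles, every remaining vertex (including $v_2,v_4$) is a common neighbour of $a$ and $b$ and of nothing else, which is exactly the spoke structure of $K_{2,m}$ with $m\ge 2$; the only optional extra edge is the pole edge $ab$, yielding $K_{2,m}^+$, and adding $ab$ keeps the circumference equal to $4$.

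The step I expect to be the main obstacle is the edge bookkeeping in this final stage. For $m\ge 3$ no two spokes may be adjacent, since an edge $c_ic_j$ closes the $5$-cycle $c_i\,a\,c_k\,b\,c_j\,c_i$; this must be argued cleanly together with the absence of any other stray edges. The genuinely delicate point is the degenerate case $m=2$, where both diagonals of $C$ may occur as chords: one must check by hand which of these small graphs have circumference $4$, and in particular reconcile $K_4$ (which also has circumference $4$ yet is isomorphic to neither $K_{2,m}$ nor $K_{2,m}^+$) with the stated list. Once this small-case analysis and the ``no stray edge'' verification are pinned down, the remainder is routine cycle-length arithmetic on top of the fan lemma.
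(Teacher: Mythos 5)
Your proposal is correct and follows essentially the same route as the paper: locate a $C_4$, show that every further vertex is joined by two single edges to one fixed opposite pair of that cycle and to nothing else, and conclude that the graph is $K_{2,m}$ with at most the pole edge added. Your preliminary reduction to ``circumference at most $4$'' is only implicit in the paper, but that is a cosmetic difference. The one substantive point is your remark about the case $m=2$: you are right that $K_4$ is 2-connected, has no $C_5$ minor (it has only four vertices), and is isomorphic to none of $K_3$, $K_{2,m}$, $K_{2,m}^+$, so the lemma as stated in fact omits it. The paper's proof misses this because the step ``consider a neighbour $a$ of $v$ different from $v_1$ and $v_3$ \dots\ $a$ cannot be adjacent to $v_1$ or $v_3$, since this would create a $C_5$'' tacitly assumes $a$ is a fifth vertex outside the $4$-cycle; when $a=v_2$ the claimed $C_5$ does not exist. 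This is a (harmless) gap in the statement rather than in your argument: adding $K_4$ to the list repairs it, and it has no effect on the later applications, where $K_4$ is just another block of bounded root degree falling into class~1.
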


\begin{proof}
Let $G$ be a 2-connected graph in $\ex(C_5)$.
Since $G$ has at least four vertices, it contains  $C_4$ as a subgraph. Let $v,v_1,v_2,v_3$ be the vertices in cyclic order of a $C_4$ in $G$. Assume $v$ has degree larger than 2    and consider a neighbor $a$ of $v$ different from $v_1$ and $v_3$. Observe that $a$ cannot be adjacent to $v_1$ or $v_3$, since this would create a $C_5$. By 2-connectivity, there must exist a path from $a$ to $v_2$ containing none of $v,v_1,v_3$. Since $G$ is in $\ex(C_5)$, it follows that $a$ is adjacent to $v_2$. This holds for all neighbors of $v$. Thus,  they must form an independent set, and we obtain a copy of $K_{2,m}$. The only edge that can be added while staying in $\ex(C_5)$ is the edge $vv_2$, giving rise to $K_{2,m}^+$.
\end{proof}

Define the graph $H_{2,s,t}$ obtained by identifying a vertex $v$ of degree $s$ in $K_{2,s}$ and a vertex of degree $t$ in $K_{2,t}$, and by adding an edge between the other vertices $u$ and $w$  of degree $s$ and $t$, respectively. We denote by $H_{2,s,t}^*$ any of the following graphs: $H_{2,1,t}$ plus the edge joining $w$ and the only common neighbor of $u$ and $v$; the symmetric construction from $H_{2,s,1}$; and all the graphs obtained from $H_{2,s,t}$ or any of these by adding any of the edges $vu$ and $vw$ (see Figure~\ref{C6image}).
\begin{figure}[htpb!]
  \begin{center}
    \includegraphics[width=0.2\textwidth]{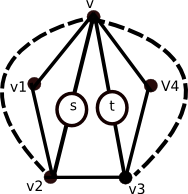}
  \end{center}
  \caption{The graph $H_{2,s,t}$ with the notation as in Lemma~\ref{lem:c6}, and with two optional edges (dashed)}\label{C6image}
\end{figure}

\begin{lem}\label{lem:c6}
The only 2-connected graphs in $\ex(C_6)$ are those in $\ex(C_5)$, the graphs $H_{2,s,t}$, and any graph of the form $H_{2,s,t}^*$, for $s,t\ge1$.
\end{lem}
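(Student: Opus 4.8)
The plan is to use the fact that a graph contains $C_6$ as a minor if and only if it has a cycle of length at least six, so that $\ex(C_6)$ is exactly the class of graphs of circumference at most five. I would therefore let $G$ be a 2-connected graph in $\ex(C_6)$ and split according to its circumference. If $G$ has no cycle of length five it lies in $\ex(C_5)$, and Lemma~\ref{lem:c5} disposes of this case. The real work is when the circumference is exactly five; here I fix a 5-cycle $C=v_1v_2v_3v_4v_5$ (a longest cycle) and analyse how the rest of $G$ attaches to $C$, entirely in the style of Lemma~\ref{lem:c5}: whenever a configuration would let me complete a long arc of $C$ into a cycle of length at least six, that configuration is forbidden.

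The core is a short chain of such local constraints. First I would show that a vertex $a\notin C$ adjacent to two vertices of $C$ must have those neighbours at cyclic distance two, since two cyclically consecutive neighbours $v_i,v_{i+1}$ close the complementary four-edge arc into a $C_6$ through $a$. Next, using 2-connectivity (Menger's theorem provides two vertex-disjoint paths from $a$ to $C$, say of lengths $p_1,p_2$ ending at distinct $v_i,v_j$) together with the fact that the larger of the two arcs of $C$ between any two of its vertices has length at least three, the cycle formed by the two paths and that larger arc has length $p_1+p_2+3$ or more; avoiding a $C_6$ forces $p_1=p_2=1$. Hence every vertex off $C$ is adjacent to exactly two vertices of $C$, at distance two. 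The same type of argument shows that two vertices off $C$ are never adjacent (such an edge again lengthens an arc into a $C_6$), so the outside vertices form an independent set, each joined to a single ``diagonal pair'' $\{v_i,v_{i+2}\}$.

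It then remains to bound how many diagonal pairs occur. Two \emph{disjoint} diagonal pairs, each carrying an outside vertex, always complete into a $C_6$, while two pairs meeting in a common cycle-vertex do not; since the five diagonal pairs, under ``share a vertex'', form a $5$-cycle (the pentagram on $v_1,\dots,v_5$, whose intersection graph is triangle-free), any set of pairwise-compatible pairs has size at most two, and if it has size two the two pairs meet in one vertex $v$. After relabelling I may take $v=v_3$ with pairs $\{v_1,v_3\}$ and $\{v_3,v_5\}$, so the outside vertices split into a bundle of length-two paths from $v_3$ to $v_1$ (together with $v_2$) and a bundle from $v_3$ to $v_5$ (together with $v_4$). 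This is exactly $H_{2,s,t}$ with centre $v=v_3$, $u=v_1$, $w=v_5$, and the cycle edge $v_5v_1$ playing the role of the edge $uw$. The final step is a case check of which chords of $C$ may coexist with the bundles: producing a $C_6$ through a bundle vertex rules out all chords except $vu$, $vw$, and---only when the relevant bundle is a single vertex, i.e.\ $s=1$ or $t=1$---the chord from $w$ (resp.\ $u$) to that vertex, which are precisely the edges permitted in the definition of $H_{2,s,t}^*$.

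I expect the main obstacle to be this last bookkeeping of admissible chords rather than the structural part. One must match the list coming out of the case analysis with the somewhat intricate definition of $H_{2,s,t}^*$, bearing in mind that the roles $u,v,w,\ell,m$ are not canonically determined by $G$, so symmetric chord configurations have to be absorbed by re-choosing the labelling of $C$; and one must decide how to treat the finitely many small dense graphs on five vertices (such as $K_5$) in which both bundles are trivial. The independence of the outside vertices and the pentagram compatibility are robust and quick, but making the chord enumeration agree exactly with $H_{2,s,t}^*$ is where the care is needed.
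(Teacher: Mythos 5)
Your proof follows essentially the same route as the paper's: fix a $C_5$, use $2$-connectivity and the exclusion of long cycles to force every outside vertex to be a vertex of an independent set joined to a ``diagonal pair'' of the cycle, and then restrict which pairs and chords can coexist; your pentagram/intersection-graph step is a correct and rather cleaner way of making precise what the paper dismisses with ``it can be checked that $G$ is either $H_{2,s,t}$ or is in $H_{2,s,t}^*$''. Your closing worry is well placed: $K_5$ has five vertices and hence lies in $\ex(C_6)$, is $2$-connected, is not in $\ex(C_5)$, and is not of the form $H_{2,s,t}$ or $H_{2,s,t}^*$ as defined (all of which omit the chord between the two degree-two cycle vertices), so it is a genuine---though harmless for the later maximum-degree arguments, which only use blocks of large degree---sporadic exception that neither your sketch nor the paper's proof accounts for.
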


\begin{proof}
Let $G$ be a 2-connected graph in $\ex(C_6)$. If $G$ is in $\ex(C_5)$, we apply the previous lemma. Otherwise let $v,v_1,v_2,v_3,v_4$ be the vertices in cyclic order of a $C_5$ in $G$ (see Figure~\ref{C6image}). Call these vertices \emph{special}. Assume $v$ has degree sufficiently large. As before, $N(v)$ is an independent set. Consider a neighbor $a$ of $v$ (excluding special vertices). As in the proof of Lemma~\ref{lem:c5}, by 2-connectivity, $a$ is adjacent to either $v_2$ or $v_3$, but not both. Let $A = N(v) \cap N(v_2) - \{v_1\}$, $B = N(v) \cap N(v_3) - \{v_4\}$, $s=|A|+1$, and $t=|B|+1$. With this notation, it can be checked that $G$ is either $H_{2,s,t}$ or is in $H_{2,s,t}^*$, possibly with $v_3$ or $v_4$ playing the role of $v$.
\end{proof}

\paragraph{Remark.} When later we refer to graphs $H_{2,s,t}$ or in $H_{2,s,t}^*$,
 with $v_3$ or $v_4$ playing the role of $v$, they will be denoted as
 $\widetilde{H}_{2,s,t}$ and $\widetilde{H}_{2,s,t}^*$.

\medskip
 Define the graph $S_{s,t,u,w}$ to be the graph constructed as follows: start with a $6$-cycle whose vertices in cyclic order are $v,v_1,v_2,v_3,v_4,v_5$, and call these vertices \textit{special}. In addition there are $w \geq 0$ vertices connecting $v_2$ and $v_4$, $s \geq 0$ vertices connecting $v$ with $v_2$, $t \geq 0$ vertices connecting $v$ with $v_4$, and $u \geq 0$ vertices connecting $v$ with both $v_2$ and $v_4$ (in all cases excluding special vertices).
 Define then by $S_{s,t,u,w}^*$ any graph obtained by possibly adding any of the edges $vv_2$, $vv_4$, $v_2v_4$, $v_1v_4$, and $v_2v_5$ (the latter edges may be added only in some cases without creating a $7$-cycle, see Figure~\ref{C7image1}).
%
% Similarly, define also the graph $U_{s,t,u}$: start with a $6$-cycle $v,v_1,v_2,v_3,v_4,v_5$, and call these again special.There are  $s \geq 0$ vertices connecting $v$ with $v_2$ (excluding special vertices), there are $t \geq 0$ vertices connecting $v$ with $v_4$ (excluding special vertices), and there are $u \geq 1$ vertices connecting $v$ with both $v_2$ and $v_4$ (excluding special vertices).  As before, define then by $U_{s,t,u}^*$ any graph obtained by possibly adding any of the edges $v,v_2$, $v,v_4$, $v_2,v_4$, $v_1,v_4$, $v_2,v_5$ and $v,v_3$ (note again that not all of these edges can be added always).
\begin{figure}[htpb!]
  \begin{center}
    \includegraphics[width=0.3\textwidth]{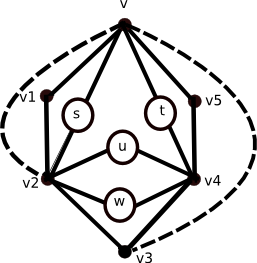}
  \end{center}
  \caption{The graph $S_{s,t,u,w}$ with the notation as in Lemma~\ref{lem:c7}, and with two optional edges (dashed)}\label{C7image1}
\end{figure}

 Finally, let $V_{s,t,E}$ the following class of graphs: start with a $6$-cycle $v,v_1,v_2,v_3,v_4,v_5$, again called special vertices. There is a set $A$ of $s \geq 0$ vertices connecting $v$ with $v_2$, and  a set $B$ of $t \geq 0$ vertices connecting $v$ with $v_4$ (always excluding special vertices).

  In addition, there is the following set of connections between $v$ and $v_3$ (not including vertices in $A$ or $B$ or special vertices) specified by $K=\{e_1,e_2,e_3,e_4,e_5,e_6\}$. There are $e_1 \geq 0$ vertices connecting $v$ with $v_3$, and $e_2$ pairs of vertices which are adjacent to each other, and both are adjacent to both $v$ and $v_3$. Furthermore, there are $e_3$ disjoint graphs $K_{2,q_i}
$ (for $i=1,\ldots,e_3$)  emanating from $v_3$, and the other vertex of degree $q_i$ is connected to $v$. For $e_4$, the construction is the same, except that for these graphs also the edge between $v_3$ and the other vertex of degree $q_i$ is present. Finally, there are $e_5$ and $e_6$ disjoint graphs $K_{2,q_i}$ which are as the graphs $e_3$ and $e_4$, but with the roles of $v_3$ and $v$ exchanged. For further reference, call the graphs of group $e_3$ and $e_4$ \emph{double stars} of degree $q_i$ emanating from $v_3$ (for $i=1,\ldots,e_3$), and those of group $e_5$ and $e_6$ double stars emanating from $v$ of degree $q_i$. All vertices appearing in any of the six groups are disjoint and we  refer to them as \emph{external} vertices. Finally, $V_{s,t,E}^*$ is the class of graphs obtained by possibly adding any of the edges $vv_2$, $vv_4$, $v_2v_4$, $v_1v_4$, $v_2v_5$ and $vv_3$ (again, some of the edges might only be added in some cases, see Figure~\ref{C7image2} for an example).
\begin{figure}[htpb!]
  \begin{center}
    \includegraphics[width=0.55\textwidth]{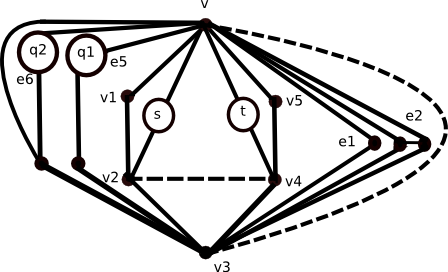}
  \end{center}
  \caption{The graph $V_{s,t,E}$ with the notation as in Lemma~\ref{lem:c7} ($e_1=e_2=1$, $e_3=e_4=0$, $e_5=e_6=1$ with corresponding degrees $q_1$ and $q_2$), and with two optional edges (dashed)}\label{C7image2}
\end{figure}

  \begin{lem}\label{lem:c7}
 The only 2-connected graphs in $\ex(C_7)$ are those in $\ex(C_6)$, the graphs  $S_{s,t,u,w}$, $V_{s,t,E}$ and the corresponding graphs $S_{s,t,u,w}^*$,  $V_{s,t,E}^*$.
 \end{lem}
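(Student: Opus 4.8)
The plan is to mirror the inductive case analysis used for Lemmas~\ref{lem:c5} and~\ref{lem:c6}. Let $G$ be a 2-connected graph in $\ex(C_7)$. If $G$ lies in $\ex(C_6)$ we are done by Lemma~\ref{lem:c6}, so assume $G$ contains a $6$-cycle but no $7$-cycle. Fix such a cycle $v,v_1,v_2,v_3,v_4,v_5$ in cyclic order, declare these vertices \emph{special}, and take $v$ to be a vertex of largest degree (the configurations with $v$ of bounded degree are recovered as the members of the families below with small parameters). On this $C_6$ the vertices $v_2$ and $v_4$ sit at distance two from $v$, while $v_3$ is antipodal; this asymmetry is exactly what produces the two different families $S_{s,t,u,w}$ and $V_{s,t,E}$.

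First I would pin down the possible attachments of a single non-special neighbour $a$ of $v$. By 2-connectivity there is a path from $a$ to the cycle avoiding $v$; combining such a path with the two arcs of $C_6$ from its endpoint back to $v$, and using that $G$ has no $C_7$, I would show that $a$ must in fact be adjacent to one of $v_2,v_4,v_3$, since any longer detour, or an attachment at $v_1$ or $v_5$, yields a $7$-cycle. This is the precise analogue of the step ``$a$ is adjacent to $v_2$ or $v_3$'' in the proofs of Lemmas~\ref{lem:c5} and~\ref{lem:c6}, and it splits the analysis according to whether the antipodal vertex $v_3$ is used by the external structure.

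Next I would treat the case in which no external vertex is joined to $v_3$. Here every external vertex is adjacent to a subset of $\{v,v_2,v_4\}$, and a short cycle-chasing argument, in which each forbidden adjacency pattern is shown to create a $C_7$, leaves only the possibilities: adjacent to $v$ and $v_2$, to $v$ and $v_4$, to $v$ and to both, or to $v_2$ and $v_4$, giving exactly the parameters $s,t,u,w$ of $S_{s,t,u,w}$. I would then check, edge by edge, which of the chords $vv_2,vv_4,v_2v_4,v_1v_4,v_2v_5$ can be added without closing a $7$-cycle, obtaining the family $S^*_{s,t,u,w}$.

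The hard part is the case in which some external structure joins $v$ to the antipodal vertex $v_3$, leading to $V_{s,t,E}$. The sets $A$ and $B$ of $v$--$v_2$ and $v$--$v_4$ connectors are classified as above, so the real work is to describe the connections between $v$ and $v_3$. The key observation is that any connected piece attached to both $v$ and $v_3$ has bounded internal length, since a long internal path together with a short arc of $C_6$ from $v_3$ back to $v$ would already give a $C_7$; this forces each such piece to be one of the blocks allowed between two vertices in $\ex(C_5)$, namely a single common neighbour, an adjacent pair of common neighbours, or a copy of $K_{2,q}$ or $K_{2,q}^+$ emanating from $v_3$ or from $v$ --- precisely the groups $e_1,\dots,e_6$ and the double stars of the definition. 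I would also argue, again by exhibiting explicit $7$-cycles, that $v_3$-connections are incompatible with the $w$- and $u$-type connectors of $S$, which is why the two families are disjoint, and finally determine which of the optional chords $vv_2,vv_4,v_2v_4,v_1v_4,v_2v_5,vv_3$ survive in each configuration to get $V^*_{s,t,E}$. I expect this step to be the main obstacle: it requires an exhaustive verification that the listed gadgets are the only ones avoiding a $C_7$ and that, for each admissible combination, no optional chord secretly creates one. The bookkeeping is always of the same two kinds, either producing a concrete $7$-cycle to exclude a configuration or checking that the longest cycle of an allowed graph is exactly $6$, and reusing the $\ex(C_5)$ block description from Lemma~\ref{lem:c5} is what keeps the case analysis finite. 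A final routine check that every graph in the listed families is $2$-connected and $C_7$-free completes the characterization.
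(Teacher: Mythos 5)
Your overall strategy is the same as the paper's: fix a $C_6$ with special vertices $v,v_1,\dots,v_5$, use 2-connectivity to force every external vertex to connect back to the cycle, and use $C_7$-avoidance to constrain how, splitting into an ``$S$-type'' case and a ``$V$-type'' case. Your organization (split on whether the antipodal vertex $v_3$ is used) is an equivalent repackaging of the paper's split (on whether there is a vertex with two internally disjoint length-three paths to $v$, resp.\ a common neighbour of $v,v_2,v_4$), since each side proves the two phenomena mutually exclusive.

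However, your first reduction step is false as stated, and it is load-bearing. You claim that every non-special neighbour $a$ of $v$ must be \emph{adjacent} to one of $v_2,v_4,v_3$, ``since any longer detour \dots yields a $7$-cycle.'' A detour of length two from $a$ to $v_3$ does \emph{not} yield a $C_7$: if $a\in N(v)$ and $a$--$b$--$v_3$ is a path, the cycle $v,a,b,v_3,v_4,v_5,v$ has length six. This is exactly the configuration realized by the double stars in the groups $e_3,\dots,e_6$ of $V_{s,t,E}$ (e.g.\ in an $e_5$-star the $q$ degree-two vertices are neighbours of $v$ adjacent to none of $v_2,v_3,v_4$, reaching $v_3$ only through the hub $z$), and it is precisely the new phenomenon that distinguishes $\ex(C_7)$ from $\ex(C_6)$ and drives the extra iterated logarithm in Theorems~\ref{thm:upperc5}--\ref{thm:upperc7}. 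You do reintroduce these gadgets later when you classify the $v$--$v_3$ connectors, so your final answer is right, but taken literally your step one would have already excluded them; the correct statement is that $a$ either is adjacent to $v_2$ or $v_4$, or reaches $v_3$ by a path of length one \emph{or two} avoiding the other special vertices (length three already closes a $C_7$ with the arc $v_3v_2v_1v$). With that correction, and with the (admittedly tedious, and also only sketched in the paper) exhaustive check that the length-$\le 2$ connectors between $v$ and $v_3$ can only combine into the six groups $e_1,\dots,e_6$ and that the optional chords are as listed, your plan matches the paper's proof.
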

 \begin{proof}
 Let $G$ be a 2-connected graph in $\ex(C_7)$. If $G$ is in $\ex(C_6)$, we apply the previous lemma.
 Otherwise let $v,v_1,v_2,v_3,v_4, v_5$ be the vertices in cyclic order of a $C_6$ in $G$, again called special. Assume  that $v$ has degree sufficiently large. Again, $N(v)$ is an independent set. We distinguish two cases now.
 In the sequel all new vertices considered are not special vertices.  \\
 \textit{Case 1:} There is no other vertex $a$ with the property that there are two internally vertex-disjoint paths of length three from $v$ to $a$. We distinguish between two subcases.\\
  \textit{Case 1.1:} Suppose first that there exist $w \geq 1$ vertices $a \in N(v)$ that are adjacent to both $v_2$ and $v_4$. Observe that the existence of such a vertex $a$ implies that no external vertex $e$ can be present in $G$, as otherwise one would have a cycle of length at least $7$ (namely, $v$,$v_1$,$v_2$,$a$,$v_4$,$v_3$,$e$,$v$). Hence, all neighbors of $v$  can be partitioned into three sets $A$, $B$ and $C$, where $A$ is the set of $s \geq 0$ vertices connected only with $v_2$, $B$ is the set of $t \geq 0$ vertices connected only  $v_4$, and $C$ is the set of  $u \geq 1$ vertices connected to both $v_2$ and $v_4$. This corresponds exactly to the graph $S_{s,t,u,w}$ with $w=0$. It is easy to check that except for edges yielding a graph in $S_{s,t,u,w}^*$, no edge can be added, as otherwise a $7$-cycle would be generated (see Figure~\ref{C7image1}).
    \\
  \textit{Case 1.2:} Suppose that there is no vertex $a \in N(v)$  adjacent to both $v_2$ and $v_4$. Let $A$ be the neighbors of $v$ connecting $v$ with $v_2$, and let $B$ be the neighbors of $v$  connecting $v$ with $v_4$.
  Let $s=|A|$ and $t=|B|$.
External vertices  connecting $v$ with $v_3$ are now possible. Note first that none of them can be adjacent to a special vertex or to a vertex in $A$ or $B$.  There can be $e_1$ vertices connecting $v$ with $v_3$, and $e_2$ pairs of vertices, adjacent to each other, both adjacent to $v$ and $v_3$. Also, we might have $e_3$ ($e_4$, respectively) double stars of degree $q_i \geq 0$ emanating from $v_3$, where the other vertex of degree $q_i$ is also adjacent to $v$ (in the case of the $e_4$ vertices, the edge between $v_3$ and the other vertex of degree $q_i$ is also present). Also, the roles of $v_3$ and $v$ can be interchanged, yielding $e_5$ double stars ($e_6$, respectively) of degree $q_i$ emanating from $v$ (in the case of the $e_6$ stars, the edge between $v$ and the other vertex of degree $q_i$ is added as well; observe that in the case of the $e_5$ double stars we may assume $q_i \geq 2$, as otherwise these vertices appear already among the $e_3$ stars). The six groups are disjoint and  there can be no  other edge between external vertices. Thus, denoting $K=\{e_1,\ldots,e_6\}$, we obtain a graph in $V_{s,t,E}$. As before, it can be checked that no other edge except for edges yielding a graph in $V_{s,t,E}^*$ can be added (see Figure~\ref{C7image2}). \\
   \textit{Case 2:} There exists at least one more vertex $a$ such that there are two internally vertex-disjoint paths of length three from $v$ to $a$.
These paths must be of the form $v, v_1, v_2, a$ and $v, v_5,v_4, a$ (if for example instead of the edge $vv_1$ there would be an edge $vz$ for some other vertex $z$, there would be a path of length $6$ going from $z$,$v$,$v_5,\ldots,v_1$, which, by 2-connectivity, would give a cycle of length at least $7$). We  suppose there are $w \geq 1$ such vertices $a$ with such paths. Observe that the existence of such a vertex $a$ implies that no external vertex $e$ can be present in $G$, as otherwise one would have a cycle of length at least $7$ (namely, the cycle $v$,$v_1$,$v_2$,$a$,$v_4$, $v_3$,$e$,$v$). All neighbors of $v$  can thus be partitioned into three sets $A$, $B$, and $C$, where $A$ are those connected only with $v_2$, $B$ those connected only with $v_4$, and $C$ those  connected both with $v_2$ and $v_4$.
We let $s=|A|,t=|B|, u=|C|$.
Let $W$  be the vertices which are neither neighbors of $v$ nor special vertices, and $w=|W|$.  Again it can be checked that they all are such that there are two internally vertex-disjoint paths of length three from $v$ to them, thus yielding a graph in $S_{s,t,u,w}$. Finally,  it can be checked that except for edges yielding a graph in $S_{s,t,u,w}^*$, no other edge can be added.
 \end{proof}

 \paragraph{Remark.} When later we refer to graphs in $S_{s,t,u,w}$ or $V_{s,t,E}$ (or to the corresponding graphs in $S_{s,t,u,w}^*$ or $V_{s,t,E}^*$), where either $v_2$, $v_3$, $v_4$ or any of the external vertices of high degree play the role of $v$, they will be denoted as
 $\widetilde{S}_{s,t,u,w}$ and $\widetilde{V}_{s,t,E}$ ($\widetilde{S}_{s,t,u,w}^*$, and $\widetilde{V}_{s,t,E}^*$, respectively).

\section{Upper bounds}\label{sec:upper}

We make repeated use of the following well-known lemma, whose proof is standard and therefore omitted.
\begin{lem}\label{lem:entropy}
Let $n_1,\ldots,n_r$ be positive integers such that $\sum_i n_i=N$ for some constant $N$. Then $\sum_i n_i \log n_i$ is minimized when all $n_i$ are equal to $N/r$.
\end{lem}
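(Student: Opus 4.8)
The plan is to exploit the convexity of the function $f(x) = x\log x$ on $(0,\infty)$. Since $f''(x) = 1/x > 0$, the function $f$ is strictly convex, and the minimization of $\sum_i f(n_i)$ under a fixed-sum constraint is then a standard consequence. I would first establish the inequality for the real relaxation and then, if one insists on the integer formulation, descend to a discrete exchange argument.

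For the real relaxation I minimize $\sum_i n_i\log n_i$ over reals $n_i>0$ with $\sum_i n_i=N$. Applying Jensen's inequality to the convex $f$ gives
$$
\frac{1}{r}\sum_{i=1}^r f(n_i) \ge f\!\left(\frac{1}{r}\sum_{i=1}^r n_i\right) = f(N/r),
$$
so that $\sum_i n_i\log n_i \ge N\log(N/r)$, with equality if and only if all $n_i = N/r$ by strict convexity. Equivalently, a Lagrange multiplier computation for the constrained minimum yields the stationarity condition $\log n_i + 1 = \lambda$ for every $i$, which forces all coordinates to be equal.

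For the genuinely integral statement I would instead use a smoothing (exchange) argument, which is cleaner when $N/r$ need not be an integer. If some two coordinates satisfy $n_i \le n_j - 2$, then replacing the pair $(n_i,n_j)$ by $(n_i+1,\,n_j-1)$ keeps the sum $N$ fixed and strictly decreases $\sum_i n_i\log n_i$, because the forward difference $f(x+1)-f(x)$ is strictly increasing in $x$ (again by convexity of $f$), whence
$$
\bigl(f(n_i+1)-f(n_i)\bigr) \;<\; \bigl(f(n_j)-f(n_j-1)\bigr).
$$
Iterating this move until no two coordinates differ by more than one terminates at the most balanced integer partition of $N$; when $r\mid N$ this is exactly $n_i=N/r$ for all $i$, recovering the stated conclusion.

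The main point to flag is that there is no real obstacle here: the lemma is elementary convexity, which is precisely why the paper omits the proof. The only subtlety is the integer-versus-real discrepancy, since the literal equality ``all $n_i=N/r$'' requires $r\mid N$. In the applications of Section~\ref{sec:upper}, where $N$ and $r$ grow with $n$, the bound is invoked only up to lower-order terms, so the real relaxation via Jensen already suffices and the balanced-partition refinement is never actually needed.
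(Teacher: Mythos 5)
Your proof is correct; the paper omits the proof of this lemma as ``standard,'' and the convexity/Jensen argument you give (with the exchange argument handling the integer case and the observation that only the real relaxation is needed in the applications) is precisely that standard argument. Nothing to add.
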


Also, we need the following lemma, whose proof is a straightforward generalization of Lemma~2.2 from~\cite{MR}.
%Our proof is inspired by the proof for planar graphs in \cite{MR}.

\begin{lem}\label{lem:pendant1}
Let  $\G = \ex(H_1,\dots,H_k)$, where the $H_i$ are 2-connected. Then w.h.p. each vertex in a graph in $\G_n$ is adjacent to at most $2\log n/\log\log n$ pendant vertices.
\end{lem}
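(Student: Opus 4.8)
The plan is to run the paper's double-counting scheme, taking $\B_n$ to be the graphs in $\G_n$ in which some vertex has more than $2\log n/\log\log n$ pendant neighbours, and showing directly that $|\B_n|=o(|\G_n|)$ by redistributing pendant vertices; notice that, unlike the lower-bound proofs, this argument does not even need Lemma~\ref{lem:isolated}. Set $k=\lceil 2\log n/\log\log n\rceil$, so that a bad graph $G$ has a vertex $v$ with at least $k$ pendant neighbours. Given $G\in\B_n$, I would choose canonically (say by smallest labels) a bad vertex $v$ and a set $P=\{u_1,\dots,u_k\}$ of $k$ of its pendant neighbours, delete the $k$ edges $vu_i$, and reattach each $u_i$ by a single edge to an arbitrary vertex of $V(G)\setminus P$. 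This yields a graph $G'$ on the same $n$ vertices. Membership $G'\in\G_n$ is the first routine check: each $u_i$ has degree one in $G'$, and since the $H_i$ are $2$-connected no $H_i$-minor model can have a branch set that is (or is completed by) a degree-one vertex; deleting $P$ from $G'$ gives exactly $G-P$, which is a subgraph of $G\in\G$, so $G'\in\G$ as well. Because the unique neighbour of $u_i$ in $G'$ is its chosen target, distinct reattachments produce distinct graphs, so one bad graph yields $C(n)=(n-k)^k$ graphs.

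For the repetition function I would show that a produced graph $G'$ is recovered from the pair $(v,P)$ alone: each $u\in P$ is pendant in $G'$, so its former target is read off as its unique neighbour, and the preimage is reconstructed by deleting that edge and joining $u$ to $v$. Hence a production is determined by its pair $(v,P)$, where $v$ ranges over at most $n$ vertices and $P$ over the $k$-subsets of the pendant vertices of $G'$, giving
$$
    R(n)\le n\binom{n}{k}.
$$
This is the delicate point, and the reason the construction reattaches the $u_i$ to \emph{fresh} vertices of $V(G)\setminus P$ rather than to arbitrary ones: it guarantees the moved vertices remain pendant in $G'$, so that they are found among the pendant vertices of $G'$ and the cheap data $(v,P)$ suffices to identify the few preimages.

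Combining the two bounds,
$$
    \frac{C(n)}{R(n)}\ \ge\ \frac{(n-k)^k}{n\binom{n}{k}}\ =\ \bigl(1-o(1)\bigr)\,\frac{k!}{n},
$$
using $k=O(\log n)$ so that $(1-k/n)^k\to1$. Taking logarithms, $\log k!=k\log k-O(k)$, and since $\log k=(1-o(1))\log\log n$ we get $k\log k=(2-o(1))\log n$, whence $\log\bigl(C(n)/R(n)\bigr)=(1-o(1))\log n\to\infty$. By double counting $|\B_n|\,C(n)\le|\G_n|\,R(n)$, this gives $|\B_n|/|\G_n|\to0$, which is the claim. The quantitative crux is precisely the requirement $k!\gg n$; since $\log k\sim\log\log n$ this forces $k\gg\log n/\log\log n$, so the stated threshold $2\log n/\log\log n$ is a safe (indeed non-tight) choice, and the only genuine obstacle is engineering the construction so that the repetition stays as low as $n\binom{n}{k}$.
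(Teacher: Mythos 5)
Your proof is correct. Note that the paper itself does not prove this lemma at all: it declares the proof ``a straightforward generalization of Lemma~2.2 from~\cite{MR}'' and omits it, so there is nothing to match line by line. What you have written is a clean, self-contained instance of the paper's own double-counting methodology: the construction $C(n)=(n-k)^k$ and repetition bound $R(n)\le n\binom{n}{k}$ are both justified (the key points --- that the moved vertices stay pendant because their targets avoid $P$, that adding pendant vertices cannot create a $2$-connected minor, and that the production is recoverable from the pair $(v,P)$ --- are all handled), and the arithmetic $\log\bigl(C(n)/R(n)\bigr)=k\log k-\log n-O(k)=(1-o(1))\log n$ is right, with the constant $2$ giving exactly the needed slack over the $k\log k\approx\log n$ threshold. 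One stylistic remark: McDiarmid and Reed's original argument is a direct injection (remove the $k$ pendant neighbours and count $|\B_n|\le n\binom{n}{k}|\G_{n-k}|$, then compare $|\G_{n-k}|$ with $|\G_n|$ using a growth-constant estimate), which changes the number of vertices and so needs enumerative input; your redistribution keeps $n$ fixed and therefore needs no information about $|\G_n|$ beyond the double-counting inequality, which is arguably better suited to the spirit of this paper.
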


As in Section~\ref{sec:lower}, we illustrate our technique to reprove in a simpler way the following known result, complementing Lemma~\ref{lem:c3lower}.
\begin{lem}\label{lem:c3upper}\cite{moon,schmutz}
In the class of trees we have, for every $\epsilon > 0$,
$$
     \Delta_n \le (1+\epsilon) {\log n  \over \log\log n}.
$$
\end{lem}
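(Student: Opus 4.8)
The plan is to run the same double-counting scheme as for the lower bounds, but with the roles reversed: now the `bad' graphs are the trees having a vertex of \emph{large} degree, and the construction must \emph{disperse} such a vertex while remaining reconstructible. Fix $\epsilon>0$, set $d=\lceil(1+\epsilon)\log n/\log\log n\rceil$, and let $\B_n\subseteq\G_n$ be the trees with $\D_n\ge d$. Write $v$ for the vertex of maximum degree (breaking ties by label) and $D=\deg(v)$. I would split $\B_n=\bigcup_{D\ge d}\B_n^{(D)}$ according to the value of $D$, so that the construction and repetition functions are uniform within each class, and bound each piece separately.

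Given $G\in\B_n^{(D)}$, let $C_1,\dots,C_D$ be the components of $G-v$, relabelled so that $|C_1|\le\cdots\le|C_D|$, and let $w_i$ be the neighbour of $v$ in $C_i$. The construction cuts the $D-1$ edges $vw_1,\dots,vw_{D-1}$ (keeping only $vw_D$, so that $v$ becomes a pendant whose subtree is $C_D$) and then, processing $i=1,\dots,D-1$, reattaches the root $w_i$ to an arbitrary vertex of the component currently containing $v$. Each such move merges a floating piece into the growing $v$-component, so the final graph is a tree and lies in $\G_n$. Writing $h_i$ for the size of that component at step $i$ (namely $C_D\cup\{v\}$ together with the already-reattached subtrees, so $h_i=|C_D|+1+\sum_{j<i}|C_j|$), the number of graphs produced is $C(n)=\prod_{i=1}^{D-1}h_i$.

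For the repetition count I would reverse the operation cleanly. From an output $G'$ one recovers a preimage by choosing a vertex to play the role of $v$ and a set of $D-1$ edges to cut: cutting an edge splits off a component not containing $v$, whose endpoint inside that component is canonically the relocated root, and reattaching all these roots to $v$ restores $G$. Hence each $G'$ arises at most $R(n)\le n\binom{n-1}{D-1}$ times, and—crucially—there is \emph{no} ambiguity about which vertex of each detached piece was joined to $v$, precisely because the roots reappear as the far endpoints of the cut edges. Combining, for each fixed $D$ (up to $(1+o(1))$ factors),
\[
 \frac{|\B_n^{(D)}|}{|\G_n|}\le\frac{R(n)}{C(n)}\le\frac{n\binom{n-1}{D-1}}{\prod_{i=1}^{D-1}h_i}=\frac{n\,L_D}{(D-1)!},\qquad L_D:=\frac{n^{D-1}}{\prod_{i=1}^{D-1}h_i}.
\]
Provided the ``loss'' $L_D$ is subpolynomial, the factorial makes the $D=d$ term dominate the tail $\sum_{D\ge d}$, and the Stirling estimate $(d-1)!=n^{(1+\epsilon)(1-o(1))}$ then gives $|\B_n|/|\G_n|\le n^{-\epsilon+o(1)}\to0$, as required.

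The main obstacle is exactly the bound on $L_D$, i.e.\ showing $\prod_{i=1}^{D-1}h_i\ge n^{D-1}e^{-O(D)}$. This is where keeping $C_D$ as the fixed core matters: since every relocated subtree has size at most $|C_D|$, the pool size satisfies $h_i\ge\max\{|C_D|,\ \sum_{j\le i}|C_j|\}$, and the constraint $\sum_j|C_j|=n-1$ forces the partial sums to grow fast enough to supply the needed factor $(D-1)!$ in $\prod h_i$, so that the loss is only exponential in $D$ and hence $n^{o(1)}$ for $D\approx d$. Verifying this inequality across all size regimes—balanced subtrees (where the pool grows linearly) versus one dominant subtree (where the core is already almost all of the tree)—is the one genuinely technical point; the rest is bookkeeping. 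I would finally remark that the tempting alternative of deleting $v$ outright and reconnecting the forest by the generalized Cayley formula fails here, because reconstruction then cannot tell which vertex of each component was adjacent to $v$, and this $\prod_i|C_i|$ ambiguity is not compensated in the worst case; it is the preservation of the roots under single-edge reattachment that keeps $R(n)$ clean.
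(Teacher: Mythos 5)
Your construction and reconstruction are sound as far as they go (the output is a tree on the same vertex set, and a choice of $v$ together with the set of $D-1$ cut edges does determine the preimage), but the step you yourself flag as ``the one genuinely technical point'' is false, and the failure is fatal rather than a matter of bookkeeping. The inequality $\prod_{i=1}^{D-1}h_i\ge n^{D-1}e^{-O(D)}$ fails in an intermediate regime that your two test cases (balanced subtrees, one dominant subtree) do not see. Take $D=d$, let $k=\lfloor D/\log D\rfloor$, and suppose $D-k$ of the components of $G-v$ are singletons while the remaining $k$ have size about $n/k$. Then $h_i\approx n/k$ for the first $D-k$ steps and $h_{D-k+j}\approx jn/k$ afterwards, so $\prod_i h_i\approx (n/k)^{D-1}(k-1)!$ and your loss is $L_D\approx k^{D-1}/(k-1)!=e^{(1-o(1))D\log D}$, which for $D\approx(1+\epsilon)\log n/\log\log n$ is $n^{(1+\epsilon)(1-o(1))}$ --- polynomially large, not subpolynomial. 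Plugging this into your own bound gives $R(n)/C(n)\approx n\,k^{D-1}/\bigl((D-1)!\,(k-1)!\bigr)=n\,e^{-(1+o(1))D\log\log D}=n^{1-o(1)}\to\infty$, so the double count says nothing about such trees. Intuitively, when $v$ has many tiny subtrees the partial sums $\sum_{j\le i}|C_j|$ stay small for a long time, and the product only collects factors of $|C_D|\approx n\log D/D$ rather than $n$; the $(k-1)!$ earned on the few large components recovers only $e^{O(D)}$ of the $e^{D\log D}$ that is lost. (Note that Lemma~\ref{lem:pendant1} does not rescue you: for $\epsilon<1$ the $D-k$ singletons are within its $2\log n/\log\log n$ allowance, and the same obstruction appears with components of size $2$ or $3$ in any case.)

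The paper's proof avoids exactly this trap by not letting the reattachment targets be arbitrary vertices of the tree. It borrows $ck$ fresh pendant vertices from elsewhere in $G$ (their existence and availability guaranteed by Lemmas~\ref{lem:isolated} and~\ref{lem:pendant1}), hangs them on $v$, and distributes the $k$ former neighbours of $v$ among these $ck$ marked vertices only. The gain is then $(ck)^k$ regardless of the subtree sizes, the $n^{ck}$ reconstruction cost of the borrowed vertices is cancelled by the $\binom{\alpha n}{ck}$ ways of choosing them, and the net factor $(ck)^k/(ck)!\ge (ck)^{(1-c)k}=n^{(1-c)(1+\epsilon)(1-o(1))}$ beats the single factor $n$ from guessing $v$. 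If you want to salvage your scheme you would need to restrict the targets of the relocated roots to a small distinguished set in the same way; as written, the argument can at best yield a bound of order $\log n/\log\log\log n$, not the claimed $(1+\epsilon)\log n/\log\log n$.
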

\begin{proof}
 Let $\G$ be the class of trees and $\G_n$ the class of trees with $n$ vertices. Let $\B_n \subseteq \G_n$ now denote the set of bad graphs with  
 $$\Delta_n > (1+\epsilon) {\log n  \over \log\log n},
$$
and suppose  for contradiction that $|\mathcal{B}_n| \geq \mu |\mathcal{G}_n|$ for some $\mu > 0$, infinitely often. Let $G$ be a graph in $\B_n$, and let $v$ be a vertex with degree $k >(1+\epsilon) {\log n  \over \log\log n}.$  By Lemma~\ref{lem:isolated}, $G$ has w.h.p. at least $\alpha n$ pendant vertices, and by Lemma~\ref{lem:pendant1}, w.h.p. every vertex is adjacent to at most $2\log n/\log\log n$ pendant vertices. Hence there are w.h.p. at least $(\alpha n - 2\log n/\log\log n)\geq 2 \alpha n/3$ pendant vertices not adjacent to $v$.
 %The upper bound is now based on the block decomposition of $G$. Note that all blocks are edges and isolated vertices. 
Let $c=\min(\frac{\epsilon/2}{1+\epsilon}, \alpha/3)$ and choose a set of $\lceil ck \rceil$ vertices pendant vertices not adjacent to $v$ and delete their adjacent edges. Maintain vertex $v$ and delete all its adjacent edges. Attach the $\lceil ck \rceil \le 2\alpha n/3$ chosen vertices to $v$, and attach
%the roots of all $k$ blocks 
the former $k$ neighbours of $v$ in all possible ways to any of the previously added $\lceil ck \rceil$ vertices. Observe that the new vertices have been added in a tree-like way, hence the new graph is still in $\G$. Ignoring ceilings from now on, the number of graphs constructed in this way is at least $\binom{2 \alpha n/3}{ck}(ck)^k$. From how many graphs may the newly constructed graph $G'$ come from? One has to guess vertex $v$, and then reattach the $ck$ pendant vertices, giving rise to at most $n^{ck+1}$ choices. Hence,
$$
 {C(n) \over R(n) } \geq \frac{\binom{ 2\alpha n/3}{ck}(ck)^k}{n^{ck+1}} \geq \frac{(\alpha/3)^{ck}(ck)^k}{n (ck)! }.
$$
Note that $(ck)^k / (ck)! > (ck)^{(1-c)k}$. Taking logarithms, this gives
$$
    \log {C(n) \over R(n) } \geq (1-c)k \log k - \log n - O(k) \geq (1-c)(1+\epsilon)(1-o(1))\log n - \log n,
$$
which tends to infinity by our choice of $c$. Hence, $|\mathcal{B}_n|=o(|\mathcal{G}_n|)$.
\end{proof}

Now we proceed to prove new results.
In order to prove an upper bound for $\Delta_n$ in a class $\G$, the basic idea is to generalize the previous proof. Take a graph $G$ in $\G_n$ whose maximum degree is too large (a bad graph), and let $v$ be a vertex with large degree. Consider the blocks containing $v$ and their contribution towards the degree of $v$ (in the case of trees the only blocks are single edges): the lemmas in Section \ref{sec:blocks} tell us all possible blocks that can occur. We classify the blocks  according to whether this contribution is larger or smaller than a suitable threshold. If $B$ is a block with a vertex $b$
of large degree $t$, remove the edges connecting $b$ to its neighbours $b_1,\dots,b_t$, take $ct$ pendant vertices (where $c<1$ is a suitable constant), make them adjacent to $v$, and connect arbitrarily each of the $b_i$ to any of the new $ct$ vertices. Whatever was attached to the $b_i$ remains untouched. When necessary, we add a few extra vertices and edges to ensure unique reconstruction. Blocks with small degree are not dismantled.
This construction guarantees that we stay in $\mathcal{G}$. 
Double counting is used again to show that the proportion of bad graphs
goes to~$0$ as $n$ goes to infinity.

In the next proof we do not need all the power of this method,  since blocks
in $\ex(C_4)$  have bounded degree, but already in the class $\ex(C_5)$
there are blocks of arbitrary high degree.

\begin{lem}\label{lem:c4upper}
In the class $\ex(C_4)$ we have, for every $\epsilon > 0$,
$$
     \Delta_n \le (2+\epsilon) {\log n  \over \log\log n}.
$$
\end{lem}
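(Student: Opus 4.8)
The plan is to run essentially the same double-counting argument as in the proof of Lemma~\ref{lem:c3upper}, the only genuinely new ingredient being a structural observation about the blocks of graphs in $\ex(C_4)$.

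First I would record that structure. A graph lies in $\ex(C_4)$ exactly when it has no cycle of length at least four, so every $2$-connected block is either a single edge or a triangle (a $2$-connected graph on at least four vertices always contains a cycle of length at least four). Consequently, if $v$ has degree $k$, the blocks containing $v$ partition the neighbours of $v$ into sets of size $1$ (edge blocks) or $2$ (triangle blocks), so $v$ lies in at least $k/2$ blocks. This is precisely the reason for the factor $2$ in the bound: each block contributes at most $2$ to the degree, so a vertex of degree $k$ plays the role of a vertex with $\sim k/2$ independent branches, exactly as a neighbour does in a tree.

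Then I would set up the construction. Assume for contradiction that $|\B_n|\ge\mu|\G_n|$ infinitely often, where $\B_n$ consists of the graphs with $\Delta_n>(2+\epsilon)\log n/\log\log n$; take $G\in\B_n$ and a vertex $v$ of degree $k>(2+\epsilon)\log n/\log\log n$, lying in $b\ge k/2$ blocks. For each block let $B_i'$ be the \emph{branch} consisting of that block together with everything hanging off it away from $v$. By Lemma~\ref{lem:isolated} and Lemma~\ref{lem:pendant1} there are w.h.p.\ at least $2\alpha n/3$ pendant vertices not adjacent to $v$; I would pick $cb$ of them, delete their pendant edges, detach every branch $B_i'$ from $v$, make the $cb$ chosen vertices adjacent to $v$, and reattach each branch to one of these new vertices (re-forming its edge or triangle at the new vertex). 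Relocating a block creates no cycle of length four, so $G'\in\ex(C_4)$ and has $n$ vertices. This yields $C(n)\ge\binom{2\alpha n/3}{cb}(cb)^b$. For the repetition count I argue as in the tree case: guessing $v$ costs a factor $n$, after which the modified vertices are exactly the neighbours of $v$ in $G'$; moving all branches hanging off them back onto $v$ and returning each modified vertex to its original (pendant) location reconstructs $G$, at a cost of at most $n^{cb}$ for the locations, so $R(n)\le n^{cb+1}$. Hence
\[
\frac{C(n)}{R(n)}\ \ge\ \frac{\binom{2\alpha n/3}{cb}(cb)^b}{n^{cb+1}}\ \ge\ \frac{(\alpha/3)^{cb}(cb)^{(1-c)b}}{n},
\]
using $(cb)^b/(cb)!\ge(cb)^{(1-c)b}$. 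Taking logarithms, the $O(b)$ corrections are absorbed since $\log b\to\infty$, and as $b\log b$ is increasing with $b\ge(1+\epsilon/2)\log n/\log\log n$ we get $b\log b\ge(1+\epsilon/2)(1-o(1))\log n$; thus the leading term $(1-c)b\log b$ beats $\log n$ once $c$ is small enough that $(1-c)(1+\epsilon/2)>1$ (any $c<\epsilon/(2+\epsilon)$, and small enough that the $cb$ pendant vertices are available). So $C(n)/R(n)\to\infty$, contradicting $|\B_n|\ge\mu|\G_n|$.

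The argument is routine once the structural observation is in place, and I do not expect a genuine obstacle. The only point needing care is the reconstruction step: I must check that treating each block-branch as a single movable unit still yields an essentially unique preimage. This works because all modifications occur at the single vertex $v$, whose modified neighbours are identifiable in $G'$, and because relocating a triangle introduces no four-cycle. This is consistent with the paper's remark that the full strength of the block-decomposition method is not yet needed in $\ex(C_4)$.
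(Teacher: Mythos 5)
Your proposal is correct and follows essentially the same route as the paper: the same structural observation that blocks in $\ex(C_4)$ are edges or triangles (so a vertex of degree $k$ meets at least $k/2$ blocks), the same construction of attaching $cb$ relocated pendant vertices to $v$ and redistributing the block-branches among them, and the same count $C(n)/R(n)\ge \binom{2\alpha n/3}{cb}(cb)^b/n^{cb+1}$ with the identical choice of $c$. The only differences are notational ($b$ for the paper's $r$) and a slightly more explicit discussion of why reconstruction is unique.
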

\begin{proof}
We first observe that the only blocks in $\ex(C_4)$ are edges and triangles.
Let $\G=\ex(C_4)$ and let $\B_n \subseteq \G_n$ now denote the set of bad graphs with  
$$\Delta_n > (2+\epsilon) {\log n  \over \log\log n}.$$ 
Let $G$ be a graph in $\B_n$ and let $v$ be a vertex with degree $k >(2+\epsilon) {\log n  \over \log\log n}.$  Again, by Lemma~\ref{lem:isolated} and Lemma~\ref{lem:pendant1}, w.h.p. there are at least $(\alpha n - 2\log n/\log\log n)\geq 2 \alpha n/3$ pendant vertices not adjacent to $v$. Let $c=\min(\frac{\epsilon/2}{1+(\epsilon/2)}, \alpha/3)$. Let $r$ be the number of blocks incident to $v$ and observe that $(k/2) \le r \le k$, since the only blocks are edges and triangles.
Choose a set of $\lceil cr \rceil$ pendant vertices not adjacent to $v$ and delete their adjacent edges. Maintain vertex $v$ and delete all its adjacent edges. Attach the $\lceil cr \rceil \le 2\alpha n/3$ chosen vertices  to $v$, and attach the roots of all $r$ blocks in all possible ways to any of the previously added $\lceil cr \rceil$ vertices. The counting is as before:  the number of graphs constructed in this way is at least $\binom{2\alpha n/3}{cr}(cr)^r$, and for recovering $G$, one just has to reattach the $cr$ pendant vertices, giving rise to at most $n^{cr+1}$ choices. Hence,
$$
 {C(n) \over R(n) } \geq \frac{\binom{2 \alpha n/3}{cr}(cr)^r}{n^{cr+1}} \geq \frac{(\frac13 \alpha)^{cr}(cr)^r}{n (cr)! }.
$$
Note that $(cr)^r / (cr)! > (cr)^{(1-c)r}$. Thus, taking logarithms, this gives
$$
    \log {C(n) \over R(n) } \geq (1-c)r \log r - \log n - O(r) \geq (1-c)(k/2)\log k - \log n - O(k),
    $$
which again tends to infinity by our choice of $c$. Hence, $|\mathcal{B}_n|=o(|\mathcal{G}_n|)$.
\end{proof}

\begin{thm}\label{thm:upperc5}
In the class $\ex(C_5)$ we have, for every $\epsilon >0$,
$$
    \D_n \le (1+\epsilon) {\log n \over \log\log \log  n}     \qquad \hbox{w.h.p.}
$$
\end{thm}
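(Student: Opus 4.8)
The plan is to argue by double counting, exactly as in the proof of Lemma~\ref{lem:c4upper}, but now using the full block structure of Lemma~\ref{lem:c5}. Let $\B_n\subseteq\G_n$ be the graphs having a vertex of degree exceeding $(1+\epsilon)\log n/\log\log\log n$, and suppose for contradiction that $|\B_n|\ge\mu|\G_n|$ infinitely often. Fix a bad graph $G$ and a vertex $v$ of degree $k>(1+\epsilon)\log n/\log\log\log n$. Conditioning on Lemmas~\ref{lem:isolated} and~\ref{lem:pendant1}, $G$ has at least $2\alpha n/3$ pendant vertices not adjacent to $v$, and these will supply the vertices used in the construction. The decisive point is that $\ex(C_5)$ forbids \emph{every} cycle of length at least five, since a long cycle contracts to $C_5$; hence by Lemma~\ref{lem:c5} each block at $v$ is a $K_3$, a $K_{2,m}$ or a $K_{2,m}^+$, and within such a block all the degree-two ``middle'' neighbours of $v$ share a common second hub. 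This rigidity is what drives the whole argument.

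I would classify the blocks incident to $v$ according to whether their contribution to $\deg(v)$ is below or above a threshold $\tau$, chosen of order $\log k/\log\log k$ so as to mirror the parameter $r$ of the lower-bound construction in item~3 of Theorem~\ref{th:lower}. Blocks of small contribution are treated as indivisible units and redistributed among a fresh set of pendant vertices attached to $v$, exactly as in Lemma~\ref{lem:c4upper}. A block $B$ of large contribution is dismantled by detaching its middles from one of its hubs and routing them, \emph{in groups}, through a controlled number of new intermediate vertices taken from the pendant pool; the groups are kept together precisely so that the only new cycles closed are triangles and $4$-cycles, keeping $G'$ in $\ex(C_5)$. The counting then follows the familiar pattern: choosing the pendant vertices contributes a binomial factor, the group redistribution contributes the combinatorial entropy, and reconstruction (locate $v$, reattach the detached stars, and return each repurposed pendant to its original position) costs a factor $n$ per restored pendant, so that the powers of $n$ cancel as in Lemma~\ref{lem:c4upper}. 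Bounding the balanced case by Lemma~\ref{lem:entropy}, the surviving term should have the shape $\log n-(\mathrm{const})\,k\log\log k$, and choosing $\tau$ optimally makes the effective entropy per middle of order $\log\log k$ (via $\log\tau$) rather than $\log k$, so that $C(n)/R(n)\to\infty$ precisely once $k>(1+\epsilon)\log n/\log\log\log n$.

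The hard part is the tension between the no-long-cycle constraint and the amount of entropy one is allowed to generate. One cannot redistribute the neighbours of $v$ freely: the moment two middles of the same hub are sent to two different new vertices, the path through their common hub closes a $6$-cycle, which is forbidden. This is exactly what forces the redistribution to respect the hub/group structure, and it is the reason the usable entropy is capped at $k\log\log k$ rather than $k\log k$, producing the triple logarithm rather than the $\ex(C_4)$ double logarithm. So the delicate issues are, first, calibrating $\tau$ and the group sizes so that the entropy beats $\log n$ for $k$ above the target while the reconstruction remains cheap (the same optimization that fixes $r$ in the lower bound), and second, checking that the construction yields $C(n)/R(n)\to\infty$ \emph{uniformly} over all ways $\deg(v)$ can be split among blocks.

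I expect the genuinely troublesome configuration to be the one in which $v$'s degree is concentrated in a single very large block $K_{2,k}$ (or $K_{2,k}^+$): there $v$ has no small blocks to spread as units, and the second hub carries the same degree, so a naive dismantling either creates a forbidden cycle or merely transfers the high degree to the other hub. Handling this case—presumably by building the group hierarchy \emph{inside} the block and arguing that the resulting graphs can be reconstructed only up to the grouping—is where I anticipate most of the work to lie, together with the routine bookkeeping of the rounding and $1+o(1)$ errors in the entropy estimate.
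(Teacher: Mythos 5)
Your overall framework --- double counting, the block characterization from Lemma~\ref{lem:c5}, the threshold of order $\log k/\log\log k$, small blocks redistributed as indivisible units, Lemma~\ref{lem:entropy} for the balanced case --- matches the paper's proof. But the construction you propose for the large blocks has a genuine gap, and it surfaces exactly in the configuration you flag as troublesome. You detach the middles of a large $K_{2,t}$ from only one hub and keep them attached to the other, and you then observe, correctly for that construction, that two middles of the same hub sent to distinct new vertices close a $6$-cycle. The consequence, however, is not that the usable entropy is ``capped at $k\log\log k$''; it is that all $t$ middles of the block are forced onto a single new intermediate vertex, so a large block generates essentially no entropy at all (one choice of target vertex, i.e.\ $O(\log k)$ in the logarithm, rather than $t\log\log k$). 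In particular, when $\deg(v)$ sits in a single block $K_{2,k}$ or $K_{2,k}^+$, your construction produces far too few graphs, and you explicitly leave that case unresolved.

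The paper's construction removes the obstruction entirely rather than working around it: for each large block $K_{2,t_i}$ it deletes \emph{every} edge of the block, including the edges from the middles to the second hub $z_i^0$, dedicates $1+\lceil ct_i\rceil$ fresh pendant vertices to that block (hanging off a labelled representative so that blocks do not share targets), reattaches $z_i^0$ by a single edge to the smallest-labelled new vertex, and distributes the $t_i$ now completely free middles among the remaining $\lceil ct_i\rceil$ new vertices in all $(ct_i)^{t_i}$ ways. Everything added is tree-like, so no cycle of any length is created, membership in $\ex(C_5)$ is automatic, and the labels make reconstruction unique at cost $n^{h+1}$. The entropy per middle is then $\log t_i\ge\log\log k-\log\log\log k$ because $t_i$ exceeds the threshold, and the single-giant-block case is the \emph{easiest} one (entropy of order $k\log k$), not the hardest. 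The triple logarithm in the statement does not come from a cycle constraint on the redistribution; it comes from the worst-case splitting of $\deg(v)$ into blocks all of size about $\log k/\log\log k$, where the block-as-unit entropy $r_1\log r_1$ and the within-block entropy $\sum_i t_i\log t_i$ are both only of order $k\log\log k$ --- precisely the trade-off the threshold and Lemma~\ref{lem:entropy} are calibrated to control. To repair your argument you would need to replace the ``keep one hub'' dismantling by the fully tree-like one (or supply the within-block hierarchy you allude to, with a proof of unique reconstruction), since as written the entropy count does not close.
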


\begin{proof}
Let  $\G = \ex(C_5)$ and let $\mathcal{B}_n \subseteq \mathcal{G}_n$ the graphs with $$\Delta_n > (1+\epsilon) \log n/\log\log\log n.$$

Assume for contradiction that there is some constant $\mu$ such that $|\mathcal{B}_n| \geq \mu |\mathcal{G}_n|$ infinitely often.
Let $G$ be a graph in $\mathcal{B}_n$ and let $v$ be a vertex of $G$ such that $k = \textrm{deg}(v) > (1+\epsilon) \log n/\log\log\log n$. As before, by Lemma~\ref{lem:isolated} and Lemma~\ref{lem:pendant1}, w.h.p. there are at least $(\alpha n - 2\log n/\log\log n)\geq 2 \alpha n/3$ pendant vertices not adjacent to $v$.
The strategy of the proof is as follows. We partition the blocks incident with $v$ according to their type and to their contribution to the degree of $v$. Those with degree smaller than a threshold can be safely ignored for the asymptotics.
Those of large degree, which by Lemma~\ref{lem:c5} are isomorphic to either $K_{2,t}$ or $K_{2,t}^+$,  are used to produce many new graphs as in the proofs for the lower bounds.
Then a double counting argument is used again to show that  $|\mathcal{B}_n| / |\mathcal{G}_n| \to 0$.
The strategy for $\ex(C_6)$ and $\ex(C_7)$ is very similar but there are more types of blocks to consider, making the situation a bit cumbersome.
% in these cases we will refrain from giving complete details.

Let us proceed with the proof. We partition the $2$-connected components (blocks) attached to~$v$. Using Lemma~\ref{lem:c5}, they can be partitioned  into the following classes:
%\newpage
\begin{enumerate}
\item Blocks contributing to $\textrm{deg}(v)$ at most $\frac{\log k}{\log \log k}$. That is, these are blocks whose root degree is at most $\frac{\log k}{\log \log k}$.
\item Blocks of type $K_{2,t}$ with $t > \frac{\log k}{\log \log k}$
\item Blocks of type $K_{2,t'}^+$ with $t' > \frac{\log k}{\log \log k}$
\end{enumerate}
Let $r_i$ be the number of blocks of class $i$ and denote by $k_i$ the total contribution of edges belonging to a block of class $i$ to $\textrm{deg}(v)$. Clearly, $k=k_1+k_2+k_3$, and also observe that $r_1 \geq \frac{k_1 \log \log k}{\log  k}$ and that $r_i < \frac{k_i \log \log k}{\log k}$ for $i=2,3$.

In order not to run out of pendant vertices, let now $c=\min(\frac{\epsilon/2}{1+\epsilon}, \frac13\alpha)$. From $G$ we construct now a class of  graphs, as follows.

\begin{itemize}
\item Choose a set of $h$ ($h$ will be determined below) pendant vertices $U$ not incident to $v$ and delete their adjacent edges.
Maintain vertex $v$ and delete all its adjacent edges.
Choose three vertices from $U$, eliminate them from $U$ and make them neighbors of $v$. Call them w.l.o.g. $v_1,v_2,v_3$ and assume that their labels are sorted increasingly.
Choose $\lceil cr_1 \rceil$ vertices from $U$, eliminate them from $U$ and make them neighbors of $v_1$.
Attach the roots of all blocks of class $1$ in all possible ways to any of the previously added $\lceil cr_1 \rceil$ vertices.
Choose $r_2$ vertices from $U$, eliminate them from $U$ (each of them representing a block of class $2$) and make them neighbors of $v_2$.

\item  For each block of class $2$ of type $K_{2,t_i}$ ($i=1,\ldots,r_2$) choose $1+\lceil ct_i \rceil$  vertices from $U$, eliminate them from $U$, and connect all of them to the previously added vertex that represents the $i$-th block of this class. Let $x_i$ be the vertex with smallest label among the $1+\lceil ct_i \rceil$ vertices added ($i=1,\ldots,r_2$).
For each block $K_{2,t_i}$ of $G$, define $z_i^0$ to be the other vertex apart from $v$ of degree $t_i$, and let $z_i^1,\ldots,z_i^{t_i}$ the vertices of degree $2$. In our construction, we delete all edges belonging to the original block and we add the following edges: $z_i^0$ is connected with $x_i$, and we connect each of the vertices $z_i^j$ ($j \geq 1$) in all possible ways to any of the previously added $\lceil ct_i \rceil$ vertices excluding $x_i$.

\item
For blocks of class $3$, do the analogous steps as for blocks of type $2$.
\end{itemize}

\noindent
Observe that the new vertices have been added in a tree-like way in this construction, that is, we have not created any cycle that did not exist in the original graph. In particular, if $G \in Ex(H_1,\dots,H_k)$, so are all the newly constructed graphs.
Also observe that the number $h$ of pendant vertices used is at most $ck (1+o(1)) < \alpha n$.

We proceed to count the number of different graphs we obtain by applying this construction to one graph of $\mathcal{B}_n$. To simplify notation, we will ignore ceilings.
We obtain at least
\begin{align} \label{eq:constructions}
 & \binom{2 \alpha n/3}{h}\binom{h}{cr_1,r_2,ct_1+1,\ldots,ct_{r_2}+1,r_3,ct'_1+1,\ldots,ct'_{r_3}+1,3} r_2! r_3! \ \times  \nonumber \\
 & (cr_1)^{r_1} \left(\prod_{i=1}^{r_2} (ct_i)^{t_i} \right) \left( \prod_{i=1}^{r_3} (ct'_i)^{t'_i}\right)
\end{align}
many graphs, since there are at least $\binom{2\alpha n/3}{h}$ ways to choose $h$ pendant vertices not incident to $v$, which then have to be partitioned into the different groups explained before (yielding the multinomial coefficient). The factors $r_2!$ and $r_3!$ come from the fact that blocks of class $2$ and $3$ are distinguishable because of their labels, hence any permutation of the $r_2$ and $r_3$ vertices will give rise to different graphs. The last group of three vertices in the multinomial coefficient corresponds to the vertices $v_1,v_2,v_3$ (there is no $3!$, since the roles of these vertices are determined by their labels). The remaining factors count the possible ways to do the connections between the $t_i $ vertices and the added $ct_i$ vertices, and between the $t_i'$ and the $ct_i'$.

Since different original graphs may give rise to the same new graph, we have to divide the total number of constructions by the number of preimages of a new graph. This number is as before at most $n \cdot n^h$, since we first must guess the vertex $v$ of the original graph (this gives the factor $n$) and then we have to redistribute the $h$ newly added vertices as pendant vertices (for those we have at most $n^h$ choices).

Our goal is to show that the total number of newly constructed graphs divided by the number of preimages of a new graph tends to infinity as $n$ increases, hence contradicting the assumption that $|\mathcal{B}_n| \geq \mu |\mathcal{G}_n|$ for infinitely many values of $n$.

Note that the following expression is a lower bound of (\ref{eq:constructions}).
\[ \left(\frac{1}{3}(\alpha-c)n\right)^h (cr_1)^{(1-c)r_1}\prod_{i=1}^{r_2}(ct_i)^{(1-c)t_i} \prod_{i=1}^{r_3}(ct'_i)^{(1-c)t'_i}, \]
where we have used that $h=ck(1+o(1))$, $k<n$ so that $\frac16(\frac23 \alpha n)!/(\frac23\alpha n -h)!$ is bounded from below by $(\frac{1}{3}(\alpha-c)n)^h$; we also used that for any $g>0$ it holds that $(cg)^g/(cg)! \ge (cg)^{(1-c)g}$, and that for any $g$ such that $cg\ge 3$ it holds that $(cg)^g/(cg+1)!\ge (cg)^{(1-c)g}$.

We now divide by the number of preimages $n\cdot n^h$, and then we take logarithms. Hence, noting that $k_2=\sum_{i=1}^{r_2} t_i$ and $k_3=\sum_{i=1}^{r_3} t'_i$, we obtain
\[ -\log n -O(h)+(1-c)r_1\log r_1 -O(r_1) +(1-c)\sum_{i=1}^{r_2}t_i \log t_i -O(k_2) +(1-c)\sum_{i=1}^{r_3}t'_i \log t'_i -O(k_3). \]
By Lemma~\ref{lem:entropy}, $\sum_{i=1}^{r_2}t_i \log t_i$ is minimal when all $t_i$, and the same applies to the $t'_i$. Hence, the previous expression is bounded from below by
\begin{equation}\label{eq:final}
-\log n -O(k) +(1-c)\left( r_1\log r_1 + k_2\log \frac{k_2}{r_2} + k_3\log \frac{k_3}{r_3} \right).
\end{equation}

 Now, letting $k_i = \beta_i k$ for $i=1,2,3$, we obtain
\[ r_1 \geq \frac{k_1 \log \log k}{\log k} \geq \beta_1 \frac{k \log \log k}{\log k},\]
and thus
\[ r_1\log r_1 \ge \beta_1 \frac{k \log \log k}{\log k}(\log k+o(\log k)) = \beta_1 k \log \log k (1+o(1)).\]
Also, recall that $r_2\le \frac{k_2 \log \log k}{\log k}$, so that
\[ \frac{k_2}{r_2} \ge \frac{\log k}{\log \log k}, \]
and the term $k_2 \log \frac{k_2}{r_2}$ in (\ref{eq:final}) is at least
\[ k_2 \log \frac{k_2}{r_2} \ge k_2 \log \log k(1-o(1)) = \beta_2 k \log\log k (1-o(1)).\]
By the same argument, $k_3 \log \frac{k_3}{r_3} \ge   \beta_3 k \log\log k (1-o(1)).$ As $\beta_1+\beta_2+\beta_3=1$, one of the $\beta_i$ has to be at least $\frac13$,  hence we can safely ignore the term $-O(k)$ in~\eqref{eq:final}. The expression in~\eqref{eq:final} is thus bounded from below by
$$
(1-o(1))(1-c)k \log\log k - \log n \geq (1-o(1))(1+\epsilon/2)\log n - \log n,
$$
which tends to infinity, as desired.
\end{proof}

%%%%%%%%%%%%%%%%%%%%%%%%%%%%%%%%%%%%%%%%%%%%%%%%%%%%%%%%%%%%%%%%

\begin{thm}\label{thm:upperc6}
In the class $\ex(C_6)$ we have, for a suitable constant $C>0$,
$$
    \D_n \le C {\log n \over \log\log\log n}     \qquad \hbox{w.h.p.}
$$
\end{thm}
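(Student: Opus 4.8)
The plan is to follow the blueprint of the proof of Theorem~\ref{thm:upperc5} for $\ex(C_5)$, replacing the block characterization of Lemma~\ref{lem:c5} by that of Lemma~\ref{lem:c6}. Assume for contradiction that $|\B_n|\ge \mu|\G_n|$ infinitely often, where $\B_n\subseteq\G_n$ consists of the graphs having a vertex $v$ of degree $k>C\log n/\log\log\log n$ for a large constant $C$ to be fixed. By Lemmas~\ref{lem:isolated} and~\ref{lem:pendant1} we may condition on there being at least $2\alpha n/3$ pendant vertices not adjacent to $v$. I would then look at the blocks incident to $v$ and, using Lemma~\ref{lem:c6}, split them according to type and to their contribution to $\deg(v)$: blocks contributing at most $\log k/\log\log k$ are collected into a ``small'' class of size $r_1$ and handled exactly as in the $\ex(C_5)$ proof, while each block of larger contribution $d_i$ (of type $K_{2,t}$, $K_{2,t}^+$, $H_{2,s,t}$, or one of the starred/tilded variants, with $d_i=s+t$ in the double-fan case) is dismantled individually.

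The dismantling is the crucial step. For each large block I would delete \emph{all} of its internal edges---not merely the edges from $v$ to its block-neighbours---and then reattach the block-neighbours of $v$, all but a bounded number of which have degree two inside the block, to fresh pendant vertices hung off $v$ (organized through proxy vertices as in Theorem~\ref{thm:upperc5}), each such neighbour going to an arbitrary one of roughly $cd_i$ new vertices. The two centre vertices $u,w$ of an $H_{2,s,t}$ block (of block-degrees $s+1$ and $t+1$) are placed at designated new vertices, exactly as $z^0$ is treated in the $\ex(C_5)$ argument; being bounded in number per block they do not affect the asymptotics. Deleting the internal edges is essential: if one kept, say, the edges $ua_i$ while scrambling the $a_i$ among the pendant vertices, a path $v\,p\,a_i\,u\,a_{i'}\,p'\,v$ of length six would appear, creating a forbidden $C_6$. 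With every internal edge removed the attachment is tree-like, so no new cycle through $v$ is created and the result stays in $\ex(C_6)$, while cycles avoiding $v$ are untouched and were already legal.

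The counting is then formally identical to that of Theorem~\ref{thm:upperc5}. From a single $G\in\B_n$ one produces at least a multinomial coefficient (choosing and partitioning the $h=ck(1+o(1))$ pendant vertices used) times $(cr_1)^{r_1}\prod_i (cd_i)^{d_i}$, whereas each output graph has at most $n\cdot n^{h}$ preimages (guess $v$, then redistribute the $h$ added vertices). Taking logarithms and applying Lemma~\ref{lem:entropy}, the main term is $(1-c)\bigl(r_1\log r_1+\sum_i d_i\log d_i\bigr)$. Since every large block satisfies $d_i>\log k/\log\log k$, their number is $O(k\log\log k/\log k)$, giving $\sum_i d_i\log d_i\ge(1-o(1))k_{\mathrm{large}}\log\log k$, and symmetrically $r_1\log r_1\ge(1-o(1))k_1\log\log k$; as $k_1+k_{\mathrm{large}}=k$, one of the two is at least $k/2$. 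Hence the main term is at least $(1-o(1))(1-c)(k/2)\log\log k$. Finally, with $k\ge C\log n/\log\log\log n$ we have $\log\log k=(1+o(1))\log\log\log n$, so this is $(1-o(1))(1-c)(C/2)\log n$, which exceeds the subtracted $\log n$ (from the factor $n$ in $R(n)$) once $C>2/(1-c)$; choosing $c$ small and $C$ a large enough constant forces $C(n)/R(n)\to\infty$, the desired contradiction.

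The main obstacle I anticipate is not the counting, which is essentially that of the $\ex(C_5)$ case, but the need to verify the dismantling uniformly over every block type produced by Lemma~\ref{lem:c6}---in particular the double-fan graphs $H_{2,s,t}$, their starred variants $H_{2,s,t}^*$, and the tilded cases $\widetilde H_{2,s,t}$ in which a vertex other than the designated $v$ of the characterization carries the large degree. For each of these one must check that after deleting all internal edges the block-neighbours of $v$ can be scrambled freely and that the bounded set of remaining centre vertices can be reattached without ever closing a cycle of length six. Once this structural bookkeeping is in place, the asymptotic estimate needs only the crude bound $d_i\log d_i\gtrsim d_i\log\log k$, which is precisely why a non-optimal constant $C$ (rather than the sharp $C=1$ of $\ex(C_5)$) suffices.
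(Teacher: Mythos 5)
Your proposal is correct and follows essentially the same route as the paper: partition the blocks at $v$ by type (via Lemma~\ref{lem:c6}) and by whether their contribution exceeds $\log k/\log\log k$, dismantle the large blocks by deleting all their internal edges and scrambling the degree-two neighbours of $v$ among fresh pendant vertices attached through proxy/designated vertices that encode the block type and special vertices for unique reconstruction, and then run the same double-counting and entropy estimate as in Theorem~\ref{thm:upperc5}, settling for a non-optimal constant $C$. The only differences from the paper are cosmetic (the paper splits into $N$ subclasses and uses $k/N$ where you use a two-way split and $k/2$, and it tracks $s_j$ and $t_j$ separately rather than pooling them into $d_i$), and these do not affect correctness.
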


\begin{proof}
The proof starts as for $\ex(C_5)$. Let  $\G = \ex(C_6)$ and let $\mathcal{B}_ n \subseteq \mathcal{G}_n$ the class of graphs with $$\Delta_n > C \log n/\log\log\log n.$$
We assume for contradiction that there is some constant $\mu$ such that $|\mathcal{B}_n| \geq \mu |\mathcal{G}_n|$ infinitely often.
Let $G$ be in $\B_n$ and let $v$ be a vertex of $G$ such that
$$k=\deg(v) \geq \frac{C \log n}{\log \log \log n}
$$
for some constant $C$ large enough. By Lemma~\ref{lem:isolated} and Lemma~\ref{lem:pendant1}, w.h.p. there are at least $2 \alpha n/3$ pendant vertices not incident to $v$.
Let $c=\min(1-\frac{1+\epsilon/2}{C},\frac13 \alpha)$. We partition the $2$-connected components (blocks) attached to $v$ into different classes (see Lemma~\ref{lem:c6}).

\begin{enumerate}
\item Blocks contributing to $\deg(v)$ at most $\frac{\log k}{\log \log k}$.
\item Blocks of type $K_{2,s}$ and $K_{2,s}^+$ with $s > \frac{\log k}{\log \log k}$.
\item Blocks of type $H_{2,s,t}$ or $H_{2,s,t}^*$.
\item Blocks of type $\widetilde{H}_{2,s,t}$ or $\widetilde{H}_{2,s,t}^*$ (see the remark after Lemma~\ref{lem:c6}).
\end{enumerate}
Choose a set of $h$ pendant vertices $U$ not incident to $v$ and delete their adjacent edges. Maintain vertex $v$ and delete all its adjacent edges. We now have a bounded number $N$ of subclasses  represented by classes 1 to 4 and the possible cases in the definition of ${H}^*_{2,s,t},\widetilde{H}_{2,s,t}$ and $\widetilde{H}^*_{2,s,t}$. For each subclass $i$, let $r_i$ be the number of blocks of subclass $i$ incident with $v$. For each $i$, take a pendant vertex $w_i$ from $U$ and make it adjacent to $v$, and sort the $w_i$ in increasing order of the labels. For each $i$ (except for class 1), take $r_i$ pendant vertices from $U$ and make them adjacent to $w_i$.

For blocks in classes $1$ and $2$ (they give rise to $r_1$,$r_2$, $r_3$), the $r_i$ play the same role as in
the proof of Theorem \ref{thm:upperc5}, and we append the same construction as there.

For blocks of type $H_{2,s,t}$ the construction is very similar; they behave like the graphs $K_{2,s}$, but with two sets, of size $s$ and $t$, of vertices of degree two.
For each block of type $H_{2,s,t}$, we add two new sorted vertices from $U$ and make them adjacent to the vertex representing the block.
Take $2+ cs $ and $2+ ct $ vertices from $U$ (ignoring ceilings from now on) and connect  them, respectively,  to the two previously added vertices.
Let $x_0, x_1$ and $y_0, y_1$, respectively,  be the vertices with smallest labels (in this order) among the $2+  cs$ and the $2+ct$ added vertices.
Delete all edges belonging to the original block and the $s$ vertices to the newly added  $cs$ vertices (excluding $x_0$ and $x_1$) in all possible ways, and do the same for the $t$ vertices (excluding $y_0$ and $y_1$). Also, connect $x_0$ to $v_1$ (notation as in Lemma~\ref{lem:c6}), $x_1$ to $v_2$, and $y_0$ to $v_3$, $y_1$ to $v_4$.
For blocks of type $H_{2,s,t}^*$ the construction is exactly the same;
the fact that the different subclasses are identified by the labels as well as the special role of $v_1,v_2,v_3,v_4$ guarantees unique reconstruction.
Finally, consider blocks of type $\widetilde{H}_{2,s,t}$, and assume without loss of generality that $v_2$ plays the role of $v$. In this case we add only $cs+4$ vertices
from $U$ and make them adjacent to the vertex representing the block.
Let $x_0$, $x_1$, $x_2$ and $x_3$ be the vertices with the four smallest labels (in this order). Delete all edges in $\widetilde{H}_{2,s,t}$ emanating from $v$ and $v_2$, connect all the $s$ neighbors of $v_2$ (excluding $v_3$)  to the $cs$ vertices (excluding $x_0$,$x_1$,$x_2$ and $x_3$) in all possible ways. Connect $x_0$ to $v$, $x_1$ to $v_3$, $x_2$ to $v_1$, and $x_3$ to $v_2$. As before, the same construction is applied for $\widetilde{H}_{2,s,t}^*$ (the only difference being that all optional edges are deleted as well); as before, the special roles and the different labels of different subclasses provide all information for unique reconstruction.

\medskip
Since  no new cycle is created, given $v$ and the new graph, we can uniquely determine the original graph it comes from. Observe also that we used only $h \leq ck(1+o(1))$ pendant vertices. As before, we count the number of different graphs we obtain by applying this construction, yielding similar multinomial coefficients and other factors. Dividing by the number of preimages of a new graph, which is at most $n^{h+1}$, and taking logarithms, we obtain
$$
\renewcommand{\arraystretch}{1.5}
\begin{array}{l}
-\log n-O(h)+(1-c)r_1 \log r_1-O(r_1)+(1-c)\sum_{j=1}^{r_2}(s_j)_2 \log (s_j)_2-O(k_2)+
\\(1-c)\sum_{j=1}^{r_3}(s_j)_3 \log (s_j)_3
-O(k_3)+(1-c)\sum_{i\geq 4} \sum_{j=1}^{r_i}(s_j)_i \log (s_j)_i+\\(1-c)\sum_{i\geq 4} \sum_{j=1}^{r_i}(t_j)_i \log (t_j)_i - O(\sum_{i\geq 4} k_i),
\end{array}
$$
where we denote by $k_i$ the total contribution of blocks of subclass $i$ to the degree of $v$, and by $(s_j)_i$ and $(t_j)_i$ the corresponding sizes of the $j$th block of subclass $i \geq 4$ (note that the second sum over $i \geq 4$ does not apply to subclasses in $\widetilde{H}_{2,s,t}^*$).
By Lemma~\ref{lem:entropy} (applied twice, to each block and then to each subclass), the previous expression is at least

\begin{equation}\label{eq:c6}
-\log n-O(k)+(1-c)(r_1 \log r_1+k_2 \log \frac{k_2}{r_2}+k_3 \log \frac{k_3}{r_3}+\sum_i k_i \log \frac{k_i}{r_i}).
\end{equation}

Since $\sum_i k_i=k$, there exists some $1 \leq i \leq N$ such that $k_i \geq k/N$. If this is true for $i=1$, then
$$r_1 \log r_1 \geq \frac{k_i \log \log k}{\log k}(\log k_i+o(\log k_i))=\frac{k \log \log k}{N}(1+o(1)).
 $$
 	Otherwise, if $i \geq 2$, since $\frac{k_i}{r_i} \geq \frac{\log k}{N \log \log k}$, as before, $k_i \log \frac{k_i}{r_i} \geq  \frac{k \log \log k}{N}(1+o(1))$. Thus, by our choices of $C$ and $c$, \eqref{eq:c6} tends to infinity as desired.
\end{proof}

%%%%%%%%%%%%%%%%%%%%%%%%%%%%%%%%

In the class $\ex(C_7)$,  the right order of magnitude of the expected maximum degree changes, compared to $\ex(C_5)$ and $\ex(C_6)$. Before going into the proof, we give some intuition about the different behaviour in $\ex(C_7)$. The existence of the component $V_{s,t,E}$ as described in Lemma~\ref{lem:c7}, and in particular the existence of $t$ stars of different degrees $q_i$ inside one block, gives rise to new constructions. In order to ensure many constructions, at both levels choices have to be made: if there were few stars of a high degree, only on the second level many choices can be made, but if, however, there are many stars of small degree, on the first level many choices can be made. For a medium number of stars with medium degree, on both levels some choices can be made.
%For a medium number of stars with medium degree, on both levels a certain number of choices can be made, but since the total number of constructions is smaller when the choices are split onto two levels, only for bad graphs with a slightly higher degree many constructions can be made.
 These two choices imply that the definition of \emph{small} has to be changed, and the trade-off between the contribution of small blocks and other larger blocks (which give different types of contributions in the proofs) gives rise to an additional application of the logarithm. We now state the result for this class.

\begin{thm}\label{thm:upperc7}
 In the class $\ex(C_7)$ we have, for a suitable constant $C>0$,
$$
    \D_n \le C {\log n \over \log\log\log\log n}     \qquad \hbox{w.h.p.}
$$
\end{thm}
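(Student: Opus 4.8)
The plan is to run the same double-counting scheme as in Theorems~\ref{thm:upperc5} and~\ref{thm:upperc6}, now driven by the block list of Lemma~\ref{lem:c7}. I would set $\G=\ex(C_7)$, let $\mathcal{B}_n\subseteq\mathcal{G}_n$ consist of the graphs with $\Delta_n> C\log n/\log^{(4)}n$ for a large constant $C$, and assume for contradiction that $|\mathcal{B}_n|\ge\mu|\mathcal{G}_n|$ infinitely often. Fixing a bad graph $G$ and a vertex $v$ with $k=\deg(v)\ge C\log n/\log^{(4)}n$, Lemmas~\ref{lem:isolated} and~\ref{lem:pendant1} again provide a reservoir of at least $2\alpha n/3$ pendant vertices not adjacent to $v$. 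As before, the aim is to detach the blocks incident with $v$, re-attach their material through fresh pendant vertices in a tree-like fashion (so the result stays in $\ex(C_7)$), and manufacture $\omega(|\mathcal{B}_n|)$ graphs, i.e.\ to show $C(n)/R(n)\to\infty$, where $R(n)\le n^{h+1}$ and $h=ck(1+o(1))$ is the number of pendants spent.

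First I would split the blocks at $v$ according to Lemma~\ref{lem:c7}. Blocks lying in $\ex(C_6)$, together with the $S_{s,t,u,w}$ and $\widetilde S_{s,t,u,w}$ families, are fed to $v$ through a single high-degree vertex and can be processed exactly as in Theorem~\ref{thm:upperc6}: after fixing a threshold they divide into \emph{small} blocks, counted by their number $r_1$ and contributing a factor of order $(cr_1)^{r_1}$, and \emph{large} blocks, dismantled by scattering their degree-two vertices over $c\cdot(\text{size})$ hooks, and Lemma~\ref{lem:entropy} turns the logarithms into the by-now familiar terms $k_i\log(k_i/r_i)$. The genuinely new objects are the $V_{s,t,E}$ and $\widetilde V_{s,t,E}$ blocks: a single such block can feed $v$ through \emph{many} double stars $K_{2,q_i}$ of varying degrees $q_i$, and, crucially, the second pole of such a double star may itself be of high degree, a configuration captured exactly by the $\widetilde V$ variants. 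This is the two-level (indeed nested) source of constructions anticipated in the discussion preceding the theorem.

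The core of the argument is the treatment of a $V$-block, and here the definition of \emph{small} must change. Inside the block I would classify its double stars by a threshold $\tau$, to be optimized. A large double star ($q_i>\tau$) is taken apart by reattaching its $q_i$ degree-two vertices to $cq_i$ hooks while keeping the grouping visible; by Lemma~\ref{lem:entropy} this gives a \emph{second-level} gain of order $\sum q_i\log q_i\ge k_\ell\log\tau$, where $k_\ell$ is the total degree contributed by the large stars. Small double stars ($q_i\le\tau$) cannot be opened up profitably; instead they are handled collectively, their number $m_s$ producing a \emph{first-level} gain of order $m_s\log m_s$, which by Lemma~\ref{lem:entropy} is at least $k_s\log(k_s/\tau)$, where $k_s$ is their total degree. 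Exactly as explained before the theorem, if the block has few stars of high degree the second level carries the gain, whereas if it has many stars of low degree the first level does; and because the second pole of a double star may itself be of high degree, the same dichotomy must be applied once more, one level deeper, to the poles (via the $\widetilde V$ blocks). Balancing the first- and second-level contributions — that is, choosing $\tau$ at each of the two nested levels in the spirit of the quantities $r_i=\log^{(i)}k/\log^{(3)}k$ that appear in the matching lower bound (Theorem~\ref{th:lower}) — is what lowers the effective gain per unit of degree from the value $\log\log k$ valid in $\ex(C_5)$ and $\ex(C_6)$ down to $\log\log\log k=\log^{(3)}k$. This is the promised additional application of the logarithm.

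Collecting all contributions, every unit of $\deg(v)$ contributes at least $(1-o(1))(1-c)\log^{(3)}k$ to $\log\big(C(n)/R(n)\big)$; since $k\ge C\log n/\log^{(4)}n$ gives $\log^{(3)}k\ge(1-o(1))\log^{(4)}n$, this yields
$$
\log\frac{C(n)}{R(n)}\ \ge\ (1-o(1))(1-c)\,k\,\log^{(3)}k-\log n\ \ge\ \big((1-o(1))(1-c)C-1\big)\log n,
$$
which tends to infinity once $C$ is large enough, contradicting $|\mathcal{B}_n|\ge\mu|\mathcal{G}_n|$. The main obstacle is precisely the bookkeeping for the $V$ and $\widetilde V$ blocks: one must pin down the threshold and the small/large dichotomy so that the first- and second-level choices do not overlap, make sure that the nested dismantling stays in $\ex(C_7)$ and admits essentially unique reconstruction (so that $R(n)\le n^{h+1}$ survives the hierarchy, which is what prevents the naive flat estimate from delivering the too-strong bound $\log n/\log^{(3)}n$), and verify that the optimization over the two thresholds really produces $\log^{(3)}k$ and not $\log\log k$. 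The remaining block types contribute only lower-order or more favourable terms and are absorbed as in the previous two theorems.
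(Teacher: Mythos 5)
Your plan is essentially the paper's proof: the same double-counting scheme, the same block classification via Lemma~\ref{lem:c7}, the identification of the $V_{s,t,E}$-blocks with their many double stars $K_{2,q_i}$ as the new phenomenon, the small/large dichotomy on the $q_i$ at the threshold $\tau\approx\log^{(3)}n/\log^{(4)}n$, and the first-level/second-level balance that degrades the per-unit gain to $\log^{(3)}k\approx\log^{(4)}n$; the deferred bookkeeping (unique reconstruction through the labelled anchor vertices, and the separate treatment of the $\widetilde S$ and $\widetilde V$ cases) is exactly what occupies most of the paper's argument. Two small corrections: the stated first-level bound $m_s\log m_s\ge k_s\log(k_s/\tau)$ is not right (that would give a per-unit gain of $\log\log n$ and hence too strong a bound); the correct first-level gain is $z\log(z/r_j)$ with $z\approx k_s/\tau$ stars spread over $r_j\le k\log^{(3)}k/\log k$ blocks, which evaluates to $(1+o(1))k_s\log^{(4)}n$ as in your final display. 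Also, in the paper the two levels of choice both live inside a single $V$-block (assigning star centres to hooks, then scattering the leaves of the large stars), while the $\widetilde V$-blocks are a separate case in which another vertex plays the role of $v$, not a third level of nesting.
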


\begin{proof}

Let $\mathcal{G}=\ex(C_7)$.
The proof starts as for $\ex(C_5)$ and $\ex(C_6)$. Let $\mathcal{B}_n \subseteq \mathcal{G}_n$ the graphs with $$\Delta_n > C \log n/\log\log\log \log n.$$
We assume once more for contradiction that there is some constant $\mu$ such that $|\mathcal{B}_n| \geq \mu |\mathcal{G}_n|$ infinitely often. Let $G$ be a graph in $\mathcal{B}_n$ and
let $v$ be a vertex of $G$ such that
$$k=\deg(v) \geq \frac{C \log n}{\log \log \log \log  n}
$$
for some constant $C$ large enough. Again, by Lemma~\ref{lem:isolated} and Lemma~\ref{lem:pendant1}, w.h.p. $G$ has at least $2\alpha n/3$ pendant vertices not incident to $v$. Let $c=\min(1-\frac{1+\epsilon/2}{C},\frac13 \alpha)$.
As before, we partition the $2$-connected components (blocks) attached to $v$ into different classes. Using Lemma~\ref{lem:c7}, whose notation is used in the following (see also the remark following Lemma~\ref{lem:c7}), we may partition them into
\begin{enumerate}
\item Blocks contributing to $\deg(v)$ at most $\frac{\log k}{\log \log \log k}$.
\item Blocks of type $K_{2,s}$, $K_{2,s}^+$,  $H_{2,s,t}$, $H_{2,s,t}^*$, $\widetilde{H}_{2,s,t}$ and $\widetilde{H}_{2,s,t}^*$
\item Blocks of type $S_{s,t,u,w}$, $V_{s,t,E}$, and the corresponding graphs $S_{s,t,u,w}^*$, $V_{s,t,E}^*$
\item Blocks of type $\widetilde{S}_{s,t,u,w}$, $\widetilde{V}_{s,t,E}$, and the corresponding graphs $\widetilde{S}_{s,t,u,w}^*$, $\widetilde{V}_{s,t,E}^*$
\end{enumerate}
Choose a set of $h$ pendant vertices $U$ not incident to $v$ and delete their adjacent edges. Maintain vertex $v$ and delete all its adjacent edges.
We still have a bounded number of subclasses  represented by the different classes and the possible optional edges. For each subclass $i$, let $r_i$ be the number of blocks of subclass $i$ incident with $v$. For each $i$, take a pendant vertex $w_i$ from $U$ and make it adjacent to $v$, and sort the $w_i$ in increasing order of the labels. For each $i$ (except for class 1), take $r_i$ pendant vertices from $U$ and make them adjacent to $w_i$.

For blocks in classes 1 and 2, we proceed as in the proof of Theorem~\ref{thm:upperc5} and Theorem~\ref{thm:upperc6}. We ignore ceilings and justify after the constructions that they may be safely disregarded. For blocks $S_{s,t,u,w}$ and $S_{s,t,u,w}^*$ the construction is very similar as before: for the new vertex $b$ (among the $r_i$ added ones) representing a block of such a subclass, take three   sorted vertices from $U$ and make them adjacent to $b$. Take $5+cs$, ($ct$, $cu$, respectively) vertices from $U$, and add them to the first of these sorted vertices (second and third, respectively). Denote by $x_1$,$x_2$,$x_3$,$x_4$,$x_5$ the vertices with smallest labels (in this order) of the first group. Delete all edges from the original block except for the edges incident to the $w$ vertices (excluding $v$,$v_1$,$v_3$,$v_5$, if the edges are present) that are connected with both $v_2$ and $v_4$. Append the special vertices $v_1$, $v_2$, $v_3$, $v_4$ and $v_5$ to the vertices $x_1$,$x_2$, $x_3$, $x_4$, $x_5$ in this order. Connect then the $s$ vertices (which originally were adjacent to $v$ and $v_2$) to the $cs$ vertices of the first group (excluding $x_1,\ldots,x_5$) in all possible ways, and do the analogous construction for the $t$ and $u$ vertices. Note that this time we might construct cycles of length $6$ (of the type $bx_2v_2av_4x_4b$), where $a$ is one of the $w$  vertices connecting $v_2$ and $v_4$, but by the special roles of special vertices unique reconstruction is still guaranteed.

For blocks of type $V_{s,t,E}$ and $V_{s,t,E}^*$,
%(and with $e_1,\ldots,e_6$ elements in the $6$ groups of $E$)
and its corresponding vertex $b$ representing the block, take eight sorted vertices
$b_1,\ldots,b_8$ from $U$ and make them adjacent to $b$. Take $ce_1,ce_2,ce_3,ce_4$ elements from $U$ and add them to $b_1,b_2,b_3,b_4$, respectively. Take $5+cs$ elements from $U$ (call the vertices with the $5$ smallest labels $x_1,\ldots,x_5$, in this order, as before), make them adjacent to $b_7$, and take $ct$ elements from $U$ and make them adjacent to $b_8$. From the original block delete all edges emanating from $v, v_2, v_4$, all edges between special vertices, all edges going between $v_3$ and any of the $e_1$, $e_2$ vertices of the first and second group of $E$.  For the $e_3$ graphs of the third group of $E$, for any $1 \leq i \leq e_3$ the edges between the vertices of degree $q_i$ (different from $v_3$) and its $q_i$ neighbors of degree $2$ are retained, and all others are deleted, and analogously for the $e_4$ graphs of the forth group. For the $e_5$ and $e_6$ graphs of the fifth and sixth group of $E$, all edges of it are deleted if the vertex of degree $q_i$ (different from $v$) satisfies $q_i > \frac{\log \log \log n}{\log \log \log \log n}$, otherwise all edges going between the vertex of degree $q_i$ (different from $v$) and its $q_i$ neighbors different from $v$ and $v_3$ are retained and the others are deleted. Now, connect $v_1,\ldots,v_5$ with $x_1,\ldots,x_5$. For the $e_1$ vertices originally connecting $v$ and $v_3$, connect them to the $ce_1$ vertices (which were attached to $b_1$) in all possible ways. For the $e_2$ pairs adjacent to each other and both connecting $v$ and $v_3$, connect the one with smaller label in all possible ways to the $ce_2$ vertices attached to $b_2$ (recall that the edge connecting such a pair is not deleted). For the $e_3$ and $e_4$ double stars $K_{2,q_i}$, connect all vertices of degree $q_i$ (different from $v_3$) and its pending $q_i$ neighbors with the $ce_3$ and $ce_4$ vertices attached to $b_3$ and $b_4$, respectively, in all possible ways.  For the $e_5$ graphs $K_{2,q_i}$ emanating from $v$ (of degrees $q_1,\ldots, q_{e_5}$), take $\frac{c}{2}e_5$ vertices from $U$, attach them to $b_5$, and connect each of the $e_5$ vertices $z_1,\ldots,z_{e_5}$ of degree $q_1,\ldots,q_{e_5}$ to the $\frac{c}{2}e_5$ vertices in all possible ways. Then, for each of the $z_i$ ($1 \leq i \leq e_5$), do the following:  if $q_i \leq \frac{\log \log \log n}{\log \log \log \log n}$, do nothing (recall that the neighbors of $z_i$ are still pending). Otherwise, take $\frac{c}{2}q_i$ vertices from $U$ and make them adjacent to $z_i$. Connect each of the $q_i$ vertices (originally neighbors of $z_i$) in all possible ways to the newly attached $\frac{c}{2}q_i$ vertices. The analogous construction is done for $e_6$ (with $b_6$ instead of $b_5$).  Finally,  connect the $s$ vertices originally connecting $v$ and $v_2$ (excluding special vertices) with the the group of $cs$ new vertices (excluding $x_1,\ldots,x_5$) attached to $b_7$ in all possible ways.  Similarly, connect the $t$ vertices originally connecting $v$ and $v_4$ with the group of $ct$ new vertices attached to $b_8$ in all possible ways. Here, the graph constructed is always a tree, and reconstruction is unique.

For blocks of type  $\widetilde{S}_{s,t,u,w}$ and $\widetilde{S}_{s,t,u,w}^*$, the strategy is similar as before. Assume without loss of generality that $v_2$ plays the role of $v$. In this case we take three vertices from $U$ (sorted) and make them adjacent to the vertex representing this block. Take $5+cs$ new vertices from $U$, make them adjacent to the first one, then $cu$ further ones, make them adjacent to the second one, and finally another $cw$, which are made adjacent to the third one. All edges are deleted except for edges between $v_4$ and its $t$ non-special neighbors that were also connected with $v$. The $5$ vertices of the first group with smallest labels are connected to special vertices, and the $s$,$u$ and $w$ neighbors of $v_2$ (except for special vertices) are, as before, connected in all possible ways with the $cs$, $cu$ and $cw$ vertices of the respective groups. Observe that the constructed graph is a tree.

For blocks of type  $\widetilde{V}_{s,t,E}$ and  $\widetilde{V}_{s,t,E}^*$, either of $v_2$, $v_3$, $v_4$ or any of the external vertices in double stars of degree $q \geq \frac{\log k}{\log \log \log k}$ arising in the groups $e_3,e_4,e_5,e_6$ may play the role of $v$. In all cases, edges between special vertices are always deleted. If $v_3$ plays the role of $v$, all edges between $v_2$ and its $s$ neighbors that are connected with $v$, and all edges between $v_4$ and its $t$ neighbors that are connected with $v$ are retained. The others are deleted, and the same construction is applied as for $V_{s,t,E}$ and $V_{s,t,E}^*$, with $v_3$ playing the role of $v$. If $v_2$ or $v_4$ (assume $v_2$ without loss of generality) plays the role of $v$, all edges emanating from a neighbor of $v_2$ are deleted. In order not to create a cycle of length $7$, for any of the $e_2$ adjacent pairs between  $v$ and $v_3$, both edges connecting $v_3$ with either of them are deleted. For the $e_3$ and $e_4$ graphs of the third and forth group of $E$, the $e_3$ and $e_4$ edges emanating from $v$ to these vertices are deleted, and for the $e_5$ and $e_6$ graphs of the fifth and sixth group of $E$, the $e_5$ and $e_6$ edges emanating from $v_3$ to these vertices are deleted. For the $s$ edges emanating from $v_2$ the usual reconstruction is performed (again with $5$ special vertices assuring unique reconstruction). If any of the external vertices $a$ of degree $q$ plays the role of $v$, the procedure is very similar: deletion of edges of groups $e_1,\ldots,e_6$ (except for those going to $a$ and to neighbors of $a$ different from both $v$ and $v_3$, which are all deleted) is as in the previous case. The edges emanating from the $s$ and $t$ vertices that are connected to $v$ and $v_2$, and to $v$ and $v_4$, are retained. Then, as usual, $5+cq$ vertices are taken from $U$, and the $q$ neighbors of $a$ are connected in all possible ways to the $cq$ new vertices (the $5$ vertices take care of special vertices). Note that again cycles of length $6$ can be constructed, but by the special roles of special vertices reconstruction is still unique (for example, in the reconstruction, $v_3$ is connected to all neighbors of $v$, except for the vertex it was attached to, and except for degree $2$ vertices in a block of size at least $4$ attached to $v$. In this way, $v_3$ will be connected to both vertices of type $e_2$, but not to external vertices of degree $2$ of some $K_{2,q_i}$.)

Observe that the largest cycle created is of length at most $6$, and in all cases the special vertices guarantee unique reconstruction. Observe also that the number of pendant vertices used is at most $h = ck(1+o(1))$: for contributions of type $e_5$ and $e_6$ in components $V_{s,t,E}$,$V_{s,t,E}^*$ (and of type $e_3$ and $e_4$ in components $\widetilde{V}_{s,t,E}$,$\widetilde{V}_{s,t,E}^*$ with $v_3$ playing the role of $v$), at the first level $\frac{c}{2}e_5$ vertices are used, and at the second level, at most $\frac{c}{2}\sum_{i=1}^{e_5} q_i(1+o(1))$ (note that ceilings may be safely disregarded, as only for sufficiently large $q_i$ these vertices are chosen), and since $e_5 \leq \sum_{i=1}^{e_5} q_i$, the total number is at most $c \sum_{i=1}^{e_5} q_i$. For the other contributions it is obvious. As before, for each case we count the number of different graphs we obtain by applying this construction, yielding similar multinomial coefficients and other factors as before. Then we divide by the number of preimages of a new graph, which is at most $n^{h+1}$, and take logarithms.
Similar calculations as before show that the most negative term is $-\log n$, coming from the choice of the vertex $v$. Let $N$ be the total number of types of subclasses (recall that it is still constant).

Now, if at least $k/N$ of the degree of $v$ is in blocks of size $\frac{\log k}{\log \log \log k}$, then the number of such blocks $r_1$ is at least $\frac{k \log \log \log k}{N \log k}$. By the same arguments as before, the constructions of these blocks give a term $r_1 \log r_1 \geq \frac{ k \log \log \log k}{N \log k}(\log k + o(\log k))=\frac{k}{N} \log \log \log k(1+o(1))$, and for $C$ large enough this is bigger than the (negative) term $\log n$.
Otherwise, suppose that at least $k/N$ of the degree of $v$ results from any fixed class of blocks  excluding $V_{s,t,E}$ or $V_{s,t,E}^*$ (and also excluding $\widetilde{V}_{s,t,E}$ and $\widetilde{V}_{s,t,E}^*$ with $v_3$ playing the role of $v$). Letting $r_j$ denote the number of such blocks, by similar calculations as before, as there is only one level of choice, we obtain a positive term $\Theta( k \log \frac{k}{r_j})$. Since $r_j \leq \frac{k \log \log \log k}{\log k}$,
$$
\Theta( k  \log \frac{ k}{r_j}) = \Omega( k \log \log k),$$ which is asymptotically bigger than $\log n$.

Hence, assume that $k/N$ of the degree of $v$ comes from the subclass in $V_{s,t,E}$ or $V_{s,t,E}^*$ (or $\widetilde{V}_{s,t,E}$ and $\widetilde{V}_{s,t,E}^*$ with $v_3$ playing the role of $v $), and assume without loss of generality that it is  the class $V_{s,t,E}$. Let again be $r_j \leq \frac{k \log \log \log k}{\log k}$ the number of blocks of this class. If at least $k/(2N)$ of the total degree comes from contributions of the groups of $s$, $t$, $e_1,e_2,e_3,e_4$ in the blocks of $V_{s,t,E}$, then, as before, only considering those terms, as there is one level of choice, we obtain a term $\Theta( k \log \frac{k}{r_j}) \gg \log n.$

Hence, we may assume without loss of generality that  $k/(4N)$ of the total degree comes from contributions of group $e_5$.  Once more, we split this into two subcases: if at least $k/(8N)$ of the total degree comes from double stars $K_{2,q}$ with  $q \leq \frac{\log \log \log n}{\log \log \log \log n}$, then at least $z\geq \frac{k \log \log \log \log n}{8N \log \log \log n}$ such double stars $K_{2,q}$ are needed. Denote by $z_i$ the number of double stars inside the $i$th block to $z$, for $1 \leq i \leq r_j$. Each such block gives a term $z_i \log z_i$, and the total contribution is minimized when the number of double stars is equally split among all blocks. Assuming the worst case of $r_j =\frac{k \log \log \log k}{\log k}$ and $z= \frac{k \log \log \log \log n}{8N \log \log \log n}$, the total contribution is thus at least
$$
(1-c)\left(z \log \frac{z}{r_j}\right) =(1-c)\left( z \:(1+o(1))\log \log \log n \right) = (1-c)\frac{C \log n}{8N}(1+o(1)),
$$
which for $C$ large enough is bigger than $\log n.$  If on the other hand at least $k/(8N)$ of the total degree comes from double stars $K_{2,q}$ with $q > \frac{\log \log \log n}{\log \log \log \log n}$, then first observe that the number $z$ of double stars $K_{2,q}$ contributing to the total degree satisfies
$z \leq \frac{k \log \log \log \log n}{8N\log \log \log n}.$ Denote again by $q_i$ the degree of the $i$th double star, for $1 \leq i \leq z.$  Clearly, $\sum_{i=1}^z q_i \geq k/(8N)$. Each such double star gives on the second level of choice rise to a term $(1-c)q_i \log q_i$. Assume again the worst case $\sum_{i=1}^z q_i = k/(8N)$ and $z=\frac{k \log \log \log \log n}{8N\log \log \log n}.$ This contribution is clearly minimized if the contribution is split evenly, that is, $q_i =\frac{k}{8Nz}$, and in this case we obtain
$$
(1-c)\left( \frac{k}{8N} \log \frac{k}{8Nz}\right) = (1-c) \left( \frac{k}{8N}(1+o(1)) \log \log \log \log n \right)=(1-c)\frac{C \log n}{8N}(1+o(1)).
$$
which for $C$ large enough again is bigger than $\log n.$ Hence, in all cases, $C(n)/R(n) \rightarrow \infty$, as desired, and the proof is finished.

\end{proof}

We conclude this section with a combinatorial proof of a result previously obtained by analytic methods.

\begin{thm}
In the class of outerplanar graphs we have the following upper bound:
$$
    \D_n \le c \log n \qquad \hbox{w.h.p.},
$$
where $c > 0$ is a suitable constant.
\end{thm}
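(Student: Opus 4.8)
The plan is to run the double-counting scheme of the previous upper bounds, using the block structure of outerplanar graphs. Write $\G=\ex(K_4,K_{2,3})$; both forbidden minors are $2$-connected, so Lemmas~\ref{lem:isolated} and~\ref{lem:pendant1} apply and w.h.p.\ there are at least $2\alpha n/3$ pendant vertices avoiding any prescribed vertex. (As neither $K_4$ nor $K_{2,3}$ is a minor of a fan, Theorem~\ref{th:lower}(1) already supplies the matching lower bound $\D_n\ge c\log n$.) Assuming $|\B_n|\ge\mu|\G_n|$ infinitely often, I fix $G\in\B_n$ and a vertex $v$ with $k=\deg(v)>c\log n$, and aim to build so many graphs in $\G_n$ that $C(n)/R(n)\to\infty$. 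The structural input is that every $2$-connected block is a polygon dissection, a Hamiltonian cycle with non-crossing chords. Hence, if $v$ lies in a block $B$ with $\deg_B(v)=d$, its neighbours $b_1,\dots,b_d$ occur in this order along the polygon, the chords at $v$ split $B$ into a linear sequence of sub-dissections glued along the $b_i$, and $B-v$ is a connected outerplanar graph carrying $b_1,\dots,b_d$ in the same order; from $v$, each block is a generalized fan.

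The conceptual point, and the reason the order here is $\log n$ and not the smaller orders of $\ex(C_5),\dots,\ex(C_7)$, is that this linear order severely limits how the neighbourhood of $v$ may be re-attached while remaining outerplanar. In the tree proof (Lemma~\ref{lem:c3upper}) the $k$ neighbours could be spread over $ck$ new vertices in all $(ck)^k$ ways, giving entropy of order $k\log k$ and hence threshold $\log n/\log\log n$; here almost all such re-attachments introduce a $K_4$ or a $K_{2,3}$ minor and are forbidden, and only the order-respecting ones survive. Of these there are merely singly exponentially many, so the entropy drops to order $k$ and the threshold rises to $\log n$.

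Concretely, I would split the blocks at $v$ into \emph{small} ones (contribution at most a fixed constant) and \emph{large} ones. Small blocks are dismantled as in Lemma~\ref{lem:c4upper}, re-attaching their roots to fresh pendant vertices, which gives the super-linear, harmless term $\Theta(r_1\log r_1)$. For a large block of contribution $t$ I delete the edges at $v$, cut the linear sequence $B-v$ into consecutive segments, place each segment under a new apex vertex, and glue all apices to one fresh vertex $w$ adjacent to $v$; each segment re-forms an outerplanar sub-block and the apices hang off $w$ in tree fashion, so the result stays in $\G$. The only freedom is the choice of cut points, i.e.\ a composition of $t$ into the prescribed number of parts, contributing $\binom{t-1}{ct-1}=e^{(1+o(1))\eta\, t}$ graphs with $\eta=-c\log c-(1-c)\log(1-c)>0$. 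Crucially, the segments, and hence the apices, inherit the linear order of the $b_i$, so the fresh vertices fall into \emph{ordered} rôles; selecting them as an ordered list yields $\bigl(\tfrac23\alpha n\bigr)^{ct}(1+o(1))$ rather than a binomial, and the factorial that an unordered choice contributes, precisely the factor that would defeat a merely linear entropy, never appears. Summing the block contributions and using Lemma~\ref{lem:entropy}, the total entropy is linear in $k$ with worst case a single large fan, so the number of constructions is at least $\bigl(\tfrac23\alpha n\bigr)^{ck}e^{\eta k}$, while a new graph has at most $n^{ck+1}$ pre-images (guess $v$, then redistribute the $\Theta(ck)$ added vertices). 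Taking logarithms,
\[
\log\frac{C(n)}{R(n)}\ \ge\ \bigl(c\log\tfrac23\alpha+\eta\bigr)\,k-\log n ,
\]
and since $\eta>-c\log c$ we have $c\log\tfrac23\alpha+\eta>c\log\tfrac{2\alpha}{3c}>0$ as soon as $c<2\alpha/3$. Hence the right-hand side tends to infinity once $k>(1/C)\log n$ with $C:=c\log\tfrac23\alpha+\eta$, contradicting $|\B_n|\ge\mu|\G_n|$ and giving the theorem with $c=1/C$.

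The step I expect to be hardest is re-attaching a \emph{general} (non-fan) block while guaranteeing unique reconstruction. A cut of the linear sequence between consecutive neighbours is clean only where the intervening sub-dissection is a single triangle; when it has interior vertices one must decide how to split it without creating a forbidden minor, and show that each such sub-dissection still contributes $\Omega(1)$ to the entropy, so that every block yields entropy linear in its contribution with the fan as the tight case and no iterated-logarithm loss creeps in. In parallel, as in the proofs for $\ex(C_6)$ and $\ex(C_7)$, a bounded number of canonically labelled marker vertices must pin down $v$, the vertex $w$, the apices and the cut points, so that the construction inverts uniquely and the bound $n^{ck+1}$ on the number of pre-images is justified. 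Verifying simultaneously that these order-respecting re-attachments are genuinely outerplanar and genuinely singly exponential in number, neither fewer (which would break the lower bound on $C(n)$) nor more (which they cannot be, lest the true order exceed $\log n$), is the technical heart of the argument.
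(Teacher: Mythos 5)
Your high-level reading of why outerplanarity pushes the threshold up to $\log n$ is exactly the paper's: inside a block the neighbours of $v$ carry a linear order inherited from the outer face, only order-respecting re-attachments keep the graph outerplanar, and the entropy per block drops from $t\log t$ to a composition count $\log\binom{t-1}{ct-1}=\Theta(t)$. The paper organises the case analysis differently (by the \emph{number} of blocks at $v$ rather than by their sizes: with at least $\beta\log n$ blocks it simply redistributes whole blocks over $\beta'\log n<\beta\log n$ fresh vertices and wins from the term $(\beta'\log n)^{\beta\log n}$), but your small/large dichotomy covers the same ground, and your arithmetic at the end is consistent with the paper's.

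The genuine gap is in how the linear order is to be recovered after surgery. You hang the segments of a large block from apices that are all made adjacent to a single fresh vertex $w$; those apices are structurally interchangeable, so $G'$ does not determine the cyclic order in which the segments sat in the original block. If you choose the apices as an unordered set and break the tie by labels, $C(n)$ loses a factor $(ct)!$ against the reattachment cost $n^{ct}$, and the resulting $-ct\log(ct)=-\Theta(t\log\log n)$ term swamps the linear entropy $\eta t$. If instead you choose them as an ordered list, the factor $(ct)!$ reappears in $R(n)$ as the number of admissible segment orderings when inverting the construction (already for a fan, where every segment is a triangle, every ordering yields a valid outerplanar preimage), and a bounded number of marker vertices cannot encode an ordering of $\Theta(\log n)$ segments. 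The paper's construction supplies precisely the missing device: the fresh vertices are strung into a \emph{path} $v_1,\dots,v_{\beta''\log n}$, the neighbours $u_1,\dots,u_d$ are attached to consecutive path vertices in their outer-face order, with distinct path vertices for distinct blocks so that the cut edges of the path mark block boundaries; reconstruction then costs only a guess of $v_1$ plus a factor $2$ per block for orientation. The path also disposes of your other worry about non-triangular sub-dissections between consecutive neighbours: they are never cut at all, but carried along intact between the images of their two endpoints on the path, which visibly preserves outerplanarity. With the path in place of the star of apices, your counting goes through essentially as you wrote it.
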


\begin{proof}
Let $\mathcal{G}$ be the class of outerplanar graphs and let $\mathcal{B}_n \subseteq \mathcal{G}_n$ be the graphs with $\Delta_n > c \log n$, where $c$ is a sufficiently large constant. Let $d=\lfloor  c \log n\rfloor$. As usual, we assume for contradiction that $|\mathcal{B}_n| \geq \mu |\mathcal{G}_n|$ for some $\mu > 0$.

Let $G$ be a graph in $\B_n$ and let $v$ be a vertex of degree $d$. Let $k$ be the number of blocks containing $v$.
%Let $C$ be the connected component containing $v$, and let $B_1,\dots,B_k$ be the blocks containing $v$ (there is more than  one block if $v$ is a cut vertex).
Once more, w.h.p., in $G$ there are at least $2 \alpha n/3$ pendant vertices not incident to $v$. As before, we want to show that $C(n) / R(n) \rightarrow \infty$, yielding the desired contradiction.
The construction procedure depends on whether $k$ is smaller or larger than $\beta \log n$, where $\beta>0$ is fixed constant (the value of $\beta$ is irrelevant, we could  take for instance  $\beta =1$).
We also fix positive constants $\beta' < \beta$ and $\beta''> \beta$.

\textit{Case 1: $k \ge \beta\log n$.}
 We detach the $k$ blocks from $v$ and form $k$ graphs with a pointed vertex each, the vertex corresponding to $v$. We choose $\beta'\log n$ pendant vertices $v_1,\dots, v_{\beta' \log n}$ that originally were not neighbors of $v$, delete the edges incident with them and make them neighbors of $v$. For every detached block we join its pointed vertex to one of the $v_i$, arbitrarily. The number of constructions is at least
$$
    {2\alpha n/3\choose \beta'\log n}(\beta'\log n)^{\beta\log n}.
$$
In order to recover a graph constructed in this way, we only need to guess $v$ and reattach the pendant vertices
to the original graph. This can be done in at most $n \cdot n^{\beta'\log n}$ ways. Using the previous notation, we have that
$$
    {C(n) \over R(n)} \ge {  \ds{2\alpha n/3\choose \beta'\log n}(\beta'\log n)^{\beta\log n}
                            \over n \cdot n^{\beta'\log n} } \geq (\beta' \log n)^{(\beta-\beta')\log n} \left(\frac{(2\alpha/3)^{\beta' \log n}}{n}\right),
                            $$
where we have used that $\binom{n}{k} \geq (n/k)^k$. As desired, this quantity tends to infinity as $n \to \infty$.

\textit{Case 2: $k < \beta\log n$.}
Fix an ordering of the $k$ blocks and draw each of them with all the vertices in the outer face.
This gives an ordering on the neighbors $u_1,\dots,u_d$ of $v$. Delete all the edges  incident with $v$.
Select an ordered list of $\beta''\log n$ pendant vertices, which were not originally neighbors of $v$, and make a path between them, according to the order. Call these vertices (in this order)  $v_1,\dots, v_{\beta'' \log n}$.
We join the $u_i$ to the $v_j$ while maintaining the outerplanar embedding, and with the restriction that no two $u_i$ coming from different blocks are joined to the same $v_j$.

For the $d$ neighbors $u_1,\ldots,u_d$ of $v$ we have at least $(\beta''-\beta)\log n$ choices where to make the decision to connect to the next $v_j$ (at most $\beta \log n$ times we are forced to switch to the next $v_j$).  The number of constructions is therefore at least
$$
    (2\alpha n/3)_{\beta''\log n} {d \choose \beta''\log n- \beta\log n}
    \ge \left({\alpha n  \over 2} \right)^{\beta''\log n} \left({c \log n^{} \over (\beta''-\beta)\log n}\right)^{(\beta''-\beta)\log n}
$$
In order to recover a graph constructed in this way we have to guess the vertex $v_1$. The path is identified by finding the cut edges: whenever at some vertex $v_j$ the vertices $u_i$ of a new block connect to it, the preceding edge is a cut edge. As all other edges emanating from a vertex of the path are not cut edges, these can be identified easily, and t each such cut edge the right orientation on the boundary of the next block, and thus of the vertices of this block, has to be found (if the block is an isolated vertex or a bridge then the edge emanating from the path to the block is also a cut edge, so there is also a factor $2$ in this case). In addition, the $v_j$ must be reattached to the original graph. This gives at most
$$
    n\cdot n^{\beta''\log n} 2^{\beta \log n}.
$$
Using the previous notation, we have
$$
    {C(n) \over R(n)} \ge {\left(\ds{\alpha \over 2}\right)^{\beta''\log n}\left(\ds{c \over \beta''-\beta}\right)^{(\beta''-\beta)\log n}
    \over n 2^{\beta \log n}},
$$
which tends to infinity if $c$ is large enough.
\end{proof}

\section{Conclusion and open problems}

Our work suggests several conjectures and open problems.

\begin{enumerate}
  \item We conjecture that the lower bound
  $$
    \D_n \ge c  {\log n \over \log^{(\ell+1)} n}
    $$
    for the class $\ex(C_{2\ell+1})$ is of the right order of magnitude. The proofs for $\ex(C_5)$ and $\ex(C_7)$ seem difficult to adapt for arbitrary $\ell$.

  \item We conjecture that the asymptotic behaviour of $\D_n$ is the same for $\ex(C_{2\ell})$ as for $\ex(C_{2\ell-1})$. We have shown this is the case for $\ell=2$ and $\ell=3$.
  \item We conjecture an upper bound of the form
  $$
    \D_n \le c \log n
    $$
    for the class $\ex(H_1,\dots,H_k)$, whenever the $H_i$ are 2-connected.
    Examples show that this is not true for arbitrary $H$ (see the discussion below).
    Using analytic methods, this upper bound can be proved for so-called subcritical classes of graphs (see~\cite{subcritical}), which include outerplanar and series-parallel graphs.

  \item Which are the possible orders of magnitude of $\D_n$ when forbidding a 2-connected graph? Assuming the truth of the conjecture in item 1, are there other possibilities besides $\log n$ and $\log n / \log^{(k+1)} n$?
  \item Which are the possible orders of magnitude of $\D_n$ for arbitrary minor-closed classes of graphs? Besides those discussed above, examples show that it can be constant (forbidding a star) and it can be linear (forbidding two disjoint triangles). The last statement follows from~\cite{KM}, where it is shown that the class $\ex(C_3 \cup C_3)$ is asymptotically the same as the class of  graphs $G$ having a vertex $v$ such that $G-v$ is a forest.

\item Is it true that if $H$ consists of a cycle and some chords, all of them incident to the same vertex, then  $\Delta_n = o(\log n)$ holds  in $\ex(H)$ w.h.p.?
    These are the 2-connected graphs that are a minor of some fan $F_n$, so that the proof of the first part in Theorem~\ref{th:lower} does not hold.

  \item Prove an upper bound $\D_n \le c \log n$ for series-parallel graphs without using the analysis of generating functions as in~\cite{DGN3}.
      More generally, prove such a bound for graphs of bounded tree-width (series-parallel graphs are those with tree-width at most two). In the last section  we have provided such a proof  for outerplanar graphs.
%  \item   Prove an upper bound $\D_n \le c \log n$ for the classes of knotless and linkless embeddable  graphs. The lower bound $\D_n \ge c \log n$ holds since the forbidden minors are 3-connected.

\end{enumerate}

\end{document}